\newtheorem{theorem}{Theorem}[section]
\newtheorem{corollary}[theorem]{Corollary}
\newtheorem{lemma}[theorem]{Lemma}
\newtheorem{proposition}[theorem]{Proposition}
\newtheorem*{questions}{Questions}
\theoremstyle{definition}
\newtheorem{definition}[theorem]{Definition}
\theoremstyle{remark}
\newtheorem{remark}[theorem]{Remark}
\newtheorem*{Remark}{Remark}
\numberwithin{equation}{section}
\begin{document}

\hspace{1in}

\title{Fibrations, unique path lifting, and continuous monodromy}

\author{Hanspeter Fischer}

\address{Department of Mathematical Sciences, Ball State University, Muncie, IN 47306, USA}

\email{hfischer@bsu.edu}

\author{Jacob D. Garcia}

\address{Department of Mathematical Sciences, Ball State University, Muncie, IN 47306, USA}

\email{jgarc351@ucr.edu}

\thanks{}

\subjclass[2010]{57M10, 55Q05, 55R65}

\keywords{Generalized covering projection; Continuous monodromy; Fibration with unique path lifting; Homotopically Hausdorff; Homotopically path Hausdorff}

\date{\today}

\commby{}

\begin{abstract}
Given a path-connected space $X$ and  $H\leq\pi_1(X,x_0)$,
 there is essentially only one construction of a map
 $p_H:(\widetilde{X}_H,\widetilde{x}_0)\rightarrow(X,x_0)$
 with connected and locally path-connected domain that can possibly have the following two properties: $(p_{H})_{\#}\pi_1(\widetilde{X}_H,\widetilde{x}_0)=H$ and $p_H$ has the unique lifting property. $\widetilde{X}_H$ consists of equivalence classes of paths starting at $x_0$, appropriately topologized, and $p_H$ is the endpoint projection. For $p_H$ to have these two properties, $T_1$ fibers are necessary and unique path lifting is sufficient. However, $p_H$ always admits the standard lifts of paths.

We show that $p_H$ has unique path lifting if it has continuous (standard) monodromies toward a $T_1$ fiber over $x_0$. Assuming, in addition, that $H$ is locally quasinormal (e.g., if $H$ is normal) we show that $X$ is homotopically path Hausdorff relative to $H$. We show that $p_H$ is a fibration if $X$ is locally path connected, $H$ is locally quasinormal, and all (standard) monodromies are continuous.

\end{abstract}

 \maketitle

\section{Introduction, Notation, and Terminology}\label{intro}

Recent work on generalizations of covering space theory has generated interest in the relationships between individual properties that are otherwise shared by classical covering projections. One such generalization focuses on the existence and properties of maps from connected and locally path-connected spaces to path-connected spaces that satisfy the unique lifting property (see below).
They can serve as suitable substitutes when classical covering projections are not available, as they retain much of the classical utility of the original theory (such as automorphisms being tied to fundamental groups) while sacrificing less critical features \cite{B, BFi, FZ2007}.

In this article, we explore the connections between the concepts of fibration, unique path lifting, and continuous monodromy in the context of the standard covering construction, when applied to a general path-connected space (which may or may not yield a covering projection, or even a generalized covering projection). To begin, we lay out some notation and terminology.

\vspace{5pt}

\noindent {\bf General Assumption:} Throughout, we let $X$ be a path-connected topological space with base point $x_0\in X$.
\vspace{5pt}

We refer to \cite{Spanier} for  definitions and basic properties of covering projections and fibrations.
Every covering projection $p:\widetilde{X}\rightarrow X$ is a (Hurewicz) fibration with unique path lifting. Indeed, much of the classical development of covering space theory is based on the fact that if $p:\widetilde{X}\rightarrow X$ is a fibration with unique path lifting, then we have the following:
\begin{itemize}
\item (Unique Lifting Property) For every $x\in X$ and $\widetilde{x}\in \widetilde{X}$ with $p(\widetilde{x})=x$  and for every map $f:(Y,y)\rightarrow (X,x)$ from a connected and locally path-connected space $Y$   with $f_\#\pi_1(Y,y)\leq p_\#\pi_1(\widetilde{X},\widetilde{x})$, there is a {\sl unique} map $g:Y\rightarrow \widetilde{X}$ with $g(y)=\widetilde{x}$ and $p\circ g=f$.
\item (Monodromy) For every path $\beta:[0,1]\rightarrow X$ from $\beta(0)=x$ to $\beta(1)=y$,\linebreak  there is a continuous {\em monodromy} $\phi_{\beta}: p^{-1}(x)\rightarrow p^{-1}(y)$, defined by \linebreak $\phi_\beta(\widetilde{x})=g(1)$, where $g:[0,1]\rightarrow \widetilde{X}$ is the unique path with $g(0)=\widetilde{x}$ and $p\circ g=\beta$; moreover, $\phi_\beta$  depends only on the homotopy class of $\beta$.
\end{itemize}

A map $p:\widetilde{X}\rightarrow X$ from a nonempty, connected, and locally path-connected space $\widetilde{X}$ is called a {\em generalized covering projection} if it satisfies the unique lifting property. Observe that a generalized covering projection $p:(\widetilde{X},\widetilde{x}_0)\rightarrow (X,x_0)$ is uniquely characterized, up to equivalence, by the conjugacy class of the monomorphic image $H=p_\#\pi_1(\widetilde{X},\widetilde{x}_0)$ in $G=\pi_1(X,x_0)$ and we have Aut$(\widetilde{X} \stackrel{p}{\rightarrow} X)\approx N_G(H)/H$.

If $X$ is locally path connected, then a generalized covering projection $p:\widetilde{X}\rightarrow X$  is necessarily an open map, but it might not be a local homeomorphism, might not be a fibration and, while the fibers must be T$_1$ (see Remark~\ref{HH}), there might be non-homeomorphic fibers. (See \cite{FZ2007}.)
For example, there is a generalized covering projection $p:\widetilde{\mathbb{H}}\rightarrow \mathbb{H}$ over the Hawaiian Earring $\mathbb{H}$ with simply connected $\widetilde{\mathbb{H}}$,
which is not a fibration, since not all monodromies are  continuous; in fact, it has exactly one exceptional non-discrete fiber.
There also exist generalized covering projections over $\mathbb{H}$ that are local homeomorphisms (with necessarily discrete fibers) but have  other unexpected properties, such as $H$  not containing any nontrivial normal subgroup of $G$. (See \cite{FZ2013}.)

As is shown in \cite{B}, given a subgroup $H\leq \pi_1(X,x_0)$, there is (up to equivalence) only one construction of a map $p_H:(\widetilde{X}_H,\widetilde{x}_0)\rightarrow (X,x_0)$ that can possibly yield a generalized covering projection with  $(p_{H})_{\#}\pi_1(\widetilde{X}_H,\widetilde{x}_0)=H$, namely the standard one:

 On the set of all based paths $\alpha:([0,1],0)\rightarrow (X,x_0)$, consider the equivalence relation $\alpha\sim \beta$ if and only if $\alpha(1)=\beta(1)$ and $[\alpha\cdot \beta^-]\in H$, where $\beta^-(t)=\beta(1-t)$. Denote the equivalence class of $\alpha$ by $\left<\alpha\right>$ and let $\widetilde{X}_H$ denote the set of all equivalence classes. We endow $\widetilde{X}_H$ with the topology generated by  basis elements of the form $\left<\alpha,U\right>=\{\left<\alpha\cdot \gamma \right> \mid \gamma:([0,1],0)\rightarrow (U,\alpha(1))\}$, where $U$ is an open subset of $X$ and $\left<\alpha\right>\in \widetilde{X}_H$ with $\alpha(1)\in U$. Here, $\alpha\cdot \gamma$ denotes the usual concatenation of the paths $\alpha$ and $\gamma$. One observes that $\widetilde{X}_H$ is connected and locally path connected and that the endpoint projection $p_H:\widetilde{X}_H\rightarrow X$, defined by $p_H(\left<\alpha\right>)=\alpha(1)$, is continuous.

If $p_H:\widetilde{X}_H\rightarrow X$ has unique path lifting, then it is a generalized covering projection and $(p_{H})_{\#}\pi_1(\widetilde{X}_H,\widetilde{x}_0)=H$, where $\widetilde{x}_0$ is the equivalence class represented by the constant path.

Even if $p_H:\widetilde{X}_H\rightarrow X$ does not have unique path lifting, the standard (continuous) lifts always exist when permitted by the $\pi_1$-functor. For example, the standard lift of a path $\beta:[0,1]\rightarrow X$ at $\left<\alpha\right>\in p_H^{-1}(\beta(0))$ is given by $g(t)=\left<\alpha\cdot \beta_t\right>$, where $\beta_t(s)=\beta(ts)$.

We may use the standard path lift to define the {\em standard monodromy}: For a path $\beta:[0,1]\rightarrow X$ from $\beta(0)=x$ to $\beta(1)=y$, we define $\phi_\beta:p_H^{-1}(x)\rightarrow p_H^{-1}(y)$ by $\phi_\beta(\left<\alpha\right>)=\left<\alpha\cdot \beta\right>$. While $\phi_\beta$ is clearly a bijection (with inverse $\phi_\beta^{-1}=\phi_{\beta^-}$), it might or might not be a continuous function.

  This raises the following questions.

 \begin{questions} Suppose  $p_H:\widetilde{X}_H\rightarrow X$ has T$_1$ fibers and  its standard monodromies are continuous. Does $p_H$ have unique path lifting? Is $p_H$ a fibration?
\end{questions}

In \S\ref{UPL}, we show that $p_H:\widetilde{X}_H\rightarrow X$ has unique path lifting if it has T$_1$ fibers and the standard monodromy $\phi_\beta$ is continuous for all $\beta$ with $\beta(1)=x_0$.
In \S\ref{fib}, we show that  $p_H:\widetilde{X}_H\rightarrow X$  is a fibration if $X$ is locally path connected, $H$ is locally quasinormal, and all monodromies are continuous. (See Definition~\ref{LQN} for the concept of local quasinormality. For now, we mention two examples: if $H$ is normal or if $p_H:\widetilde{X}_H\rightarrow X$ has discrete fibers, then $H$ is locally quasinormal.)

We also show, in \S\ref{HpH}, that $X$ is homotopically path Hausdorff relative to $H$ (see Definition~\ref{HpHDef}) if $H$ is locally quasinormal at the constant path (e.g., if $H$ is locally quasinormal), $p_H:\widetilde{X}_H\rightarrow X$ has T$_1$ fibers and the standard monodromy $\phi_\beta$ is continuous for all $\beta$ with $\beta(1)=x_0$.

The concept of $p_H:\widetilde{X}_H\rightarrow X$ having continuous monodromies can equivalently be described in terms of small loop transfer (SLT) in $X$ relative to $H$, as developed in \cite{BDLM} and \cite{PMTAR} (see Definition~\ref{SLT-Def} and Lemmas~\ref{SLT-Mon}--\ref{SLT-N}). Using this alternate concept, it is shown in \cite[Theorem~2.5]{PMTAR} that $X$ is homotopically path Hausdorff relative to~$H$ if $H\trianglelefteq \pi_1(X,x_0)$ is a normal subgroup, $X$ is homotopically Hausdorff relative to~$H$ (see Remark~\ref{HH}), and $X$ is an $H$-SLT space at $x_0$. In \S\ref{SLT}, we examine how our results generalize and extend Theorem~2.5 of \cite{PMTAR}.

\section{Continuous monodromy and unique path lifting}\label{UPL}
\begin{remark} \label{HH} If $p_H:\widetilde{X}_H\rightarrow X$ has unique path lifting, then for every $x\in X$, the fiber $p_H^{-1}(x)$  is $T_1$. The short proof of this fact can be found in \cite[Proposition~6.4]{FZ2007}, where $X$ is called {\em homotopically Hausdorff relative to $H$} if every fiber of  $p_H$ is $T_1$.
\end{remark}

In this section, we prove the following implication.

\begin{theorem}\label{thm1} Let $H\leq \pi_1(X,x_0)$. Suppose that $p_H^{-1}(x_0)$ is $T_1$ and that for every $x\in X$ and for every path $\beta:[0,1]\rightarrow X$ from $\beta(0)=x$ to $\beta(1)=x_0$, the monodromy $\phi_\beta:p_H^{-1}(x)\rightarrow p_H^{-1}(x_0)$ is continuous. Then $p_H:\widetilde{X}_H\rightarrow X$ has unique path lifting.
\end{theorem}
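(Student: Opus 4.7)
The strategy is to show that every continuous lift $g:[0,1]\to \widetilde{X}_H$ of a path $\delta$ at a prescribed initial point $\langle\alpha\rangle \in p_H^{-1}(\delta(0))$ must coincide with the standard lift. Combined with the existence of the standard lift, this is exactly unique path lifting. My plan is to transport each lift into the fiber $p_H^{-1}(x_0)$ via monodromies ending at $x_0$, and then exploit a special feature of the topology on that fiber: it is in fact totally disconnected, not merely $T_1$.

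The first step is a purely topological observation about $p_H^{-1}(x_0)$ with its subspace topology. For $\langle\alpha\rangle\in p_H^{-1}(x_0)$ and an open neighborhood $U$ of $x_0$, write
\[
\langle\alpha\rangle K_U := \bigl\{\langle\alpha\cdot\ell\rangle : \ell \text{ a loop in }U\text{ based at }x_0\bigr\},
\]
where $K_U$ denotes the image of $\pi_1(U,x_0)\to\pi_1(X,x_0)$. These sets are precisely $\langle\alpha,U\rangle\cap p_H^{-1}(x_0)$, and they form a neighborhood basis at $\langle\alpha\rangle$. Because $K_U$ is a \emph{subgroup}, if $\langle\alpha\rangle K_U$ and $\langle\beta\rangle K_U$ share a single point then they coincide; so each $\langle\alpha\rangle K_U$ is clopen, its complement being the union of the remaining basic sets for $U$. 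The hypothesis that $p_H^{-1}(x_0)$ is $T_1$ then provides, for any two distinct points, a clopen basic set containing one and excluding the other. Hence $p_H^{-1}(x_0)$ is totally disconnected.

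Now fix an arbitrary continuous lift $g:[0,1]\to\widetilde{X}_H$ of $\delta$ with $g(0)=\langle\alpha\rangle$ and choose an auxiliary path $\rho$ from $\delta(1)$ to $x_0$. Setting $\beta_t = \delta|_{[t,1]}\cdot\rho$, define
\[
h:[0,1]\to p_H^{-1}(x_0),\qquad h(t)=\phi_{\beta_t}(g(t)).
\]
To show $h$ is continuous at $t_0$, take a basic open $V\ni h(t_0)$, use continuity of $\phi_{\beta_{t_0}}$ at $g(t_0)=\langle\gamma_{t_0}\rangle$ to pull $V$ back to a basic open $\langle\gamma_{t_0},U''\rangle\cap p_H^{-1}(\delta(t_0))$, and then choose $U\subset U''$ together with $\epsilon>0$ so that $\delta([t_0-\epsilon,t_0+\epsilon])\subset U$ and $g([t_0-\epsilon,t_0+\epsilon])\subset\langle\gamma_{t_0},U\rangle$. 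For $|t-t_0|<\epsilon$, writing $g(t)=\langle\gamma_{t_0}\cdot\mu_t\rangle$ with $\mu_t$ a path in $U$, the rel-endpoint homotopy $\mu_t\cdot\delta|_{[t,1]}\simeq(\mu_t\cdot\delta|_{[t,t_0]})\cdot\delta|_{[t_0,1]}$ (with the obvious flip of orientation when $t>t_0$) shows that $h(t)=\phi_{\beta_{t_0}}(\langle\gamma_{t_0}\cdot\sigma_t\rangle)\in V$, where $\sigma_t$ is a loop at $\delta(t_0)$ lying in $U$.

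Since $[0,1]$ is connected and $p_H^{-1}(x_0)$ is totally disconnected, $h$ must be constant, so $h(t)=h(0)=\langle\alpha\cdot\delta\cdot\rho\rangle$ for every $t$. Applying $\phi_{\beta_t^-}=\phi_{\beta_t}^{-1}$ yields $g(t)=\langle\alpha\cdot\delta\cdot\rho\cdot\rho^-\cdot\delta|_{[t,1]}^-\rangle=\langle\alpha\cdot\delta_t\rangle$, the standard lift. The main conceptual obstacle is the first step: without recognizing that the subgroup property of $K_U$ upgrades $T_1$ to a clopen basis (hence to total disconnectedness), continuity of $h$ into a merely $T_1$ target would be insufficient to force $h$ constant. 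Once that observation is in hand, the rest is just careful bookkeeping about the definition of the topology on $\widetilde{X}_H$ and elementary path homotopies.
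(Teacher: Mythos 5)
Your proof is correct and follows essentially the same route as the paper: you establish that the $T_1$ fiber $p_H^{-1}(x_0)$ has a clopen basis (the paper's Lemma~\ref{disconnected}, phrased via quasicomponents), transport an arbitrary lift continuously into that fiber using the hypothesized monodromies toward $x_0$ (the paper's Proposition~\ref{fiberpath}), and conclude constancy, forcing the lift to be the standard one (Corollary~\ref{betaUPL}). The only cosmetic difference is that you push forward along the terminal segment $\delta|_{[t,1]}\cdot\rho$ with an auxiliary path $\rho$ to $x_0$, whereas the paper first reduces to paths starting at $x_0$ and pulls back along initial segments via $\phi_{\beta_t^-}$; both use exactly the same class of monodromies and the same continuity computation.
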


We begin with a straightforward fact, whose proof we include for completeness:

\begin{lemma}\label{disconnected}
Let $H\leq \pi_1(X,x_0)$ and $x\in X$. The fiber $p_H^{-1}(x)$ is T$_1$ if and only if all of its quasicomponents are one-point sets.
\end{lemma}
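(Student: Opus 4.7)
My plan is to prove the biconditional by handling the two directions separately, with the reverse implication immediate from general topology and the forward implication requiring a specific observation about the basis of $\widetilde{X}_H$.

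For the $(\Leftarrow)$ direction, I will just note that quasicomponents, being intersections of clopen sets, are in particular closed; so if every quasicomponent is a singleton, then every singleton is closed and the fiber is $T_1$.

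For the $(\Rightarrow)$ direction, the key step is to prove that each basic open subset $V = \langle \alpha, U\rangle \cap p_H^{-1}(x)$ of the fiber is in fact clopen in $p_H^{-1}(x)$. Openness is automatic, so the work lies in verifying closedness: given $\langle \beta\rangle \in p_H^{-1}(x)\setminus V$, I aim to show that the basic open $\langle\beta, U\rangle \cap p_H^{-1}(x)$ is a neighborhood of $\langle\beta\rangle$ disjoint from $V$. Any alleged intersection point would yield loops $\gamma,\eta$ in $U$ based at $x$ with $\langle\alpha\cdot\gamma\rangle = \langle\beta\cdot\eta\rangle$, which unpacks to $[\alpha\cdot\gamma\cdot\eta^-\cdot\beta^-] \in H$; since $\mu := \gamma\cdot\eta^-$ is itself a loop in $U$ at $x$, this reads $\langle\alpha\cdot\mu\rangle = \langle\beta\rangle$, forcing $\langle\beta\rangle \in V$, a contradiction.

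Once this clopen claim is in hand, the $T_1$ hypothesis supplies, for each $\langle\alpha'\rangle \neq \langle\alpha\rangle$ in $p_H^{-1}(x)$, a basic open set $V$ (of the above form) with $\langle\alpha\rangle \in V$ and $\langle\alpha'\rangle \notin V$, and hence a clopen set separating the two; intersecting these over all $\langle\alpha'\rangle \neq \langle\alpha\rangle$ collapses the quasicomponent of $\langle\alpha\rangle$ to $\{\langle\alpha\rangle\}$. The sole nontrivial point is the clopen claim, and I expect it to be essentially the entire substance of the argument — it is a short algebraic manipulation powered by the fact that a concatenation of two loops in $U$ is again a loop in $U$, combined with the definition of the equivalence relation defining $\widetilde{X}_H$.
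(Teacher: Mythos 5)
Your proof is correct and follows essentially the same route as the paper: your clopen claim for $\left<\alpha,U\right>\cap p_H^{-1}(x)$ is exactly the paper's observation that two basic sets $\left<\alpha,U\right>$, $\left<\beta,U\right>$ with the same $U$ are disjoint or equal, which the paper uses to build an explicit separation $W_1\sqcup W_2$ of the fiber, while you instead invoke the characterization of quasicomponents as intersections of clopen sets. The reverse direction (quasicomponents closed, hence singletons closed) is the same triviality the paper dismisses in one line.
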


\begin{proof}
  Observe that if $\left<\beta\right>\in \left<\alpha,U\right>$, then $\left<\beta,U\right>=\left<\alpha,U\right>$. Therefore, two basis elements of $\widetilde{X}_H$ of the form $\left<\alpha,U\right>$ and $\left<\beta,U\right>$ are either disjoint or equal.
Now assume that $p_H^{-1}(x)$ is T$_1$ and let $\left<\alpha\right>$ and $\left<\beta\right>$ be two distinct elements of $p_H^{-1}(x)$. Then there is an open neighborhood $U$ of $x$ in $X$ such that $\left<\beta\right>\not\in \left<\alpha,U\right>\cap p_H^{-1}(x)$.
 Put $W_1=\left<\alpha,U\right>\cap p_H^{-1}(x)$ and $W_2=\bigcup \{ \left<\gamma,U\right>\cap p_H^{-1}(x)\mid \left<\gamma\right>\in p_H^{-1}(x), \left<\gamma\right>\not\in \left<\alpha,U\right> \}$. Then $W_1$ and $W_2$ are open subsets of $p_H^{-1}(x)$ with $\left<\alpha\right>\in W_1$, $\left<\beta\right>\in W_2$, $W_1\cap W_2=\emptyset$ and $W_1\cup W_2=p_H^{-1}(x)$. The converse is trivial.
\end{proof}

\begin{proposition}\label{fiberpath}
Let $H\leq \pi_1(X,x_0)$ and let $f,g:[0,1]\rightarrow \widetilde{X}_H$ be two lifts of a path $\beta:[0,1]\rightarrow X$ with $p_H\circ f=\beta=p_H\circ g$ and $f(0)=g(0)$.

For each $t\in [0,1]$, choose a path $\alpha_t:([0,1],0)\rightarrow (X,x_0)$ so that $f(t)=\left<\alpha_t\right>$ and assume that $g$ is the standard lift with $g(t)=\left<\alpha_0\cdot\beta_t\right>$, where $\beta_t(s)=\beta(ts)$.

If the monodromy $\phi_{\beta_t^-}:p_H^{-1}(\beta(t))\rightarrow p_H^{-1}(\beta(0))$ is continuous for every $t\in [0,1]$, then the function $h:[0,1]\rightarrow p_H^{-1}(\beta(0))$, given by $h(t)=\left<\alpha_t\cdot \beta_t^-\right>$, is continuous.
\end{proposition}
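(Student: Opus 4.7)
The plan is to fix $t_0\in[0,1]$ and prove continuity of $h$ at $t_0$ by factoring $h(s)$ through the continuous monodromy $\phi_{\beta_{t_0}^-}$, rewriting the reverse initial segment $\beta_s^-$ so that it ends with $\beta_{t_0}^-$. Concretely, for $s$ near $t_0$, I let $\gamma_s:[0,1]\to X$ be the subpath of $\beta$ from $\beta(s)$ to $\beta(t_0)$ (suitably reparametrized). A routine reparametrization/cancellation argument on $\beta$ shows that $\gamma_s\cdot\beta_{t_0}^-$ is path-homotopic rel endpoints to $\beta_s^-$, so
\[
h(s)=\langle\alpha_s\cdot\beta_s^-\rangle=\langle\alpha_s\cdot\gamma_s\cdot\beta_{t_0}^-\rangle=\phi_{\beta_{t_0}^-}\bigl(\langle\alpha_s\cdot\gamma_s\rangle\bigr).
\]
This identity is uniform in the two cases $s\le t_0$ and $s\ge t_0$, and the main obstacle I anticipate is just bookkeeping it cleanly in both cases.

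Next, I introduce the auxiliary assignment $\tilde f(s)=\langle\alpha_s\cdot\gamma_s\rangle\in p_H^{-1}(\beta(t_0))$, noting $\tilde f(t_0)=\langle\alpha_{t_0}\rangle=f(t_0)$, and establish that $\tilde f$ is continuous at $t_0$. Given a basic neighborhood $\langle\alpha_{t_0},W\rangle$ of $f(t_0)$, I use continuity of $f$ to find $\delta_1>0$ so that $f(s)=\langle\alpha_s\rangle\in\langle\alpha_{t_0},W\rangle$ whenever $|s-t_0|<\delta_1$, producing a path $\eta_s:([0,1],0)\to(W,\beta(t_0))$ with $\alpha_s\sim\alpha_{t_0}\cdot\eta_s$ and $\eta_s(1)=\beta(s)$. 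Simultaneously, continuity of $\beta$ at $t_0$ supplies $\delta_2>0$ so that $\beta$ maps the interval of radius $\delta_2$ around $t_0$ into $W$, which forces the image of $\gamma_s$ to lie in $W$. For $|s-t_0|<\min(\delta_1,\delta_2)$ the concatenation $\eta_s\cdot\gamma_s$ is then a loop at $\beta(t_0)$ with image in $W$, so $\tilde f(s)=\langle\alpha_{t_0}\cdot(\eta_s\cdot\gamma_s)\rangle\in\langle\alpha_{t_0},W\rangle$, giving the desired continuity.

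Finally, the hypothesis of the proposition provides continuity of $\phi_{\beta_{t_0}^-}:p_H^{-1}(\beta(t_0))\to p_H^{-1}(\beta(0))$, so the composition $h=\phi_{\beta_{t_0}^-}\circ\tilde f$ is continuous at $t_0$. Since $t_0\in[0,1]$ was arbitrary, $h$ is continuous on $[0,1]$. The two ingredients that drive the argument, and where I expect the only real care to be needed, are the rel-endpoint homotopy $\gamma_s\cdot\beta_{t_0}^-\simeq\beta_s^-$ (which underlies the factorization) and the simultaneous verification that both $\eta_s$ and $\gamma_s$ can be kept inside a single preselected neighborhood $W$ of $\beta(t_0)$.
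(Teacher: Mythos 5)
Your argument is correct and is essentially the paper's proof: you fix $t_0$, pull a basic neighborhood of $h(t_0)$ back through the continuous monodromy $\phi_{\beta_{t_0}^-}$, and use continuity of $f$ and $\beta$ to keep the connecting paths ($\eta_s$ and the subpath $\gamma_s$ of $\beta$, the paper's $\delta_1$ and $\delta_2^-$) inside one neighborhood of $\beta(t_0)$. Packaging this as continuity of the auxiliary map $\tilde f(s)=\left<\alpha_s\cdot\gamma_s\right>$ followed by composition with $\phi_{\beta_{t_0}^-}$ is only a cosmetic reorganization of the same computation.
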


\begin{proof} Let $t\in [0,1]$. Put $\gamma=\alpha_t\cdot \beta_t^-$. Then $h(t)=\left<\gamma\right>$. Let $U$ be an open neighborhood of $\gamma(1)=\beta(0)$ in $X$. By continuity of $\phi_{\beta_t^-}$, there is an open neighborhood $V$ of $\alpha_t(1)=  \beta(t)$ in $X$ such that $\phi_{\beta_t^-}(\left<\alpha_t,V \right>\cap p_H^{-1}(\beta(t))\subseteq \left<\gamma,U\right>$. By continuity of $\beta$ and $f$, there is an interval $I$, open in $[0,1]$, with $t\in I$ such that $\beta(I)\subseteq V$ and $f(I)\subseteq \left<\alpha_t,V\right>$. Let $s\in I$. Then $f(s)=\left<\alpha_s\right>=\left<\alpha_t\cdot \delta_1\right>$ for some $\delta_1:[0,1]\rightarrow V$. Moreover, $g(s)=\left<\alpha_0\cdot \beta_s\right>=\left<\alpha_0\cdot \beta_t\cdot \delta_2\right>$, where $[\beta_s]=[\beta_t\cdot \delta_2]$ with $\delta_2:[0,1]\rightarrow \beta(I)\subseteq V$. Hence, $h(s)=\left<\alpha_s\cdot \beta_s^-\right>=\left<\alpha_t\cdot \delta_1\cdot \delta_2^-\cdot \beta_t^-\right>=\phi_{\beta_t^-}(\left<\alpha_t\cdot \delta_1\cdot \delta_2^-\right>)\in \left<\gamma,U\right>$.
\end{proof}

\begin{corollary}\label{betaUPL}
Let $H\leq \pi_1(X,x_0)$ and let $\beta:[0,1]\rightarrow X$ be a path. Suppose that \begin{itemize} \item[(i)] $\phi_{\beta_t^-}: p_H^{-1}(\beta(t))\rightarrow p_H^{-1}(\beta(0))$ is continuous for all $t\in [0,1]$; and \item[(ii)]  $p_H^{-1}(\beta(0))$ is T$_1$.\end{itemize}
Then $p_H^{-1}(\beta(t))$ is T$_1$ for all $t\in [0,1]$ and $\beta$ has unique lifts: if $f,g:[0,1]\rightarrow \widetilde{X}_H$ are two paths with $p_H\circ f=\beta=p_H\circ g$ and $f(0)=g(0)$, then $f=g$.
\end{corollary}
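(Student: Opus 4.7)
The plan is to apply Proposition \ref{fiberpath} after replacing $g$ by the standard lift of $\beta$ based at $f(0)$, and then use the $T_1$ hypothesis on the fiber $p_H^{-1}(\beta(0))$ together with Lemma \ref{disconnected} to force the continuous ``discrepancy'' map $h(t)=\left<\alpha_t\cdot\beta_t^-\right>$ to be constant. Before invoking this, I would first dispatch the $T_1$ claim for the other fibers along $\beta$: the monodromy $\phi_{\beta_t^-}\colon p_H^{-1}(\beta(t))\to p_H^{-1}(\beta(0))$ is a continuous bijection by hypothesis (i), and the preimage under any continuous injection of a closed singleton is a closed singleton, so hypothesis (ii) propagates the $T_1$ property to every $p_H^{-1}(\beta(t))$.

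For the uniqueness of lifts, let $f,g\colon[0,1]\to\widetilde{X}_H$ be two lifts of $\beta$ with $f(0)=g(0)=\left<\alpha_0\right>$. It suffices to show that any such lift coincides with the standard lift based at $\left<\alpha_0\right>$, so I take $g(t)=\left<\alpha_0\cdot\beta_t\right>$ and choose representatives $\alpha_t$ with $f(t)=\left<\alpha_t\right>$ as in Proposition~\ref{fiberpath}. That proposition produces a continuous map $h\colon[0,1]\to p_H^{-1}(\beta(0))$ given by $h(t)=\left<\alpha_t\cdot\beta_t^-\right>$, and $h(0)=\left<\alpha_0\right>$ since $\beta_0$ is the constant path at $\beta(0)$. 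By Lemma~\ref{disconnected}, every quasicomponent of the $T_1$ fiber $p_H^{-1}(\beta(0))$ is a singleton; since each connected component is contained in a quasicomponent, the fiber is totally disconnected. Continuity of $h$ and connectedness of $[0,1]$ therefore force $h\equiv\left<\alpha_0\right>$, which unpacks to $[\alpha_t\cdot\beta_t^-\cdot\alpha_0^-]\in H$ for every $t$; but this is precisely the relation $\left<\alpha_t\right>=\left<\alpha_0\cdot\beta_t\right>$, so $f=g$.

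The conceptual crux is the passage from $T_1$ to totally disconnected for these fibers, which is provided by Lemma~\ref{disconnected} and is really a feature of the $\left<\alpha,U\right>$-basis: two such basic opens meeting a common fiber point are either equal or disjoint, which yields clopen separation of distinct points within a $T_1$ fiber. Without this step, mere continuity of $h$ would not suffice, since $p_H^{-1}(\beta(0))$ need not be Hausdorff. Everything else is routine bookkeeping with path concatenation and the defining equivalence relation on $\widetilde{X}_H$.
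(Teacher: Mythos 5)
Your proof is correct and follows essentially the same route as the paper: propagate the $T_1$ property via the continuous bijection $\phi_{\beta_t^-}$, reduce to comparing an arbitrary lift with the standard lift, and use Proposition~\ref{fiberpath} together with Lemma~\ref{disconnected} to force the map $h(t)=\left<\alpha_t\cdot\beta_t^-\right>$ to be constant. The only difference is that you spell out explicitly the passage from singleton quasicomponents to total disconnectedness and the connectedness of $h([0,1])$, which the paper leaves implicit.
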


\begin{proof}
The first assertion follows from the fact that the monodromy $\phi_{\beta_t^-}$ is a continuous bijection. As for the unique lifting of $\beta$, we may assume that $f,g,\alpha_t$ and $\beta_t$ are as in Proposition~\ref{fiberpath}, so that $h:[0,1]\rightarrow p_H^{-1}(\beta(0))$, given by $h(t)=\left<\alpha_t\cdot \beta_t^-\right>$, is continuous. Since $p_H^{-1}(\beta(0))$ is T$_1$, Lemma~\ref{disconnected} implies that $h$ is constant. Hence, for all $t\in [0,1]$, $\left<\alpha_0\right>=\left<\alpha_0\cdot \beta_0^-\right>=\left<\alpha_t\cdot \beta_t^-\right>$ and $f(t)=\left<\alpha_t\right>=\left<\alpha_0\cdot \beta_t\right>=g(t)$.
\end{proof}

\begin{proof}[Proof of Theorem~\ref{thm1}] In order to prove unique path lifting, we may restrict our attention to paths $\beta:[0,1]\rightarrow X$ with $\beta(0)=x_0$. Apply Corollary~\ref{betaUPL}.
\end{proof}

\section{Locally quasinormal subgroups and fibrations}\label{fib}

For a path $\alpha:([0,1],0)\rightarrow (X,x_0)$ and an open neighborhood $U$ of $\alpha(1)$ in $X$, we define the subgroup \[\pi(\alpha,U)=\{[\alpha\cdot \delta\cdot \alpha^-]\mid \delta:([0,1],\{0,1\})\rightarrow (U,\alpha(1))\}\leq \pi_1(X,x_0).\]

\begin{definition} \label{LQN} We call a subgroup $H\leq \pi_1(X,x_0)$ {\em locally quasinormal} if for every $x\in X$, for every path $\alpha:[0,1]\rightarrow X$ from $\alpha(0)=x_0$ to $\alpha(1)=x$, and for every open neighborhood $U$ of $x$ in $X$, there is an open neighborhood $V$ of $x$ in $X$ with $x\in V\subseteq U$ such that $H\pi(\alpha,V)=\pi(\alpha,V)H$.
\end{definition}

\begin{Remark}
Recall from elementary group theory that for subgroups $H,K \leq G$, one has $HK=KH$ $\Leftrightarrow$ $HK\subseteq KH$ $\Leftrightarrow$ $HK\supseteq KH$ $\Leftrightarrow$ $HK\leq G$ and that one calls $H$ {\em quasinormal} if $HK=KH$ for every $K\leq G$.
\end{Remark}

A subgroup $H \leq \pi_1(X,x_0)$ is normal if and only if $\phi_\beta:p_H^{-1}(\alpha(1))\rightarrow p_H^{-1}(\alpha(1))$ is the identity function whenever $\phi_\beta(\left<\alpha\right>)=\left<\alpha\right>$. Local quasinormality, on the other hand, can be understood as a uniform local stability condition for the set of monodromies that fix a given point:

\begin{lemma}\label{stable}
Let $H\leq \pi_1(X,x_0)$, let $\alpha:[0,1]\rightarrow X$ be a path from $\alpha(0)=x_0$ to $\alpha(1)=x$, and let $U$ be an open neighborhood of $x$ in $X$. Then
 $H\pi(\alpha,U)=\pi(\alpha,U)H$ if and only if
 for every monodromy $\phi_\beta:p_H^{-1}(x)\rightarrow p_H^{-1}(x)$ with $\phi_\beta(\left<\alpha\right>)=\left<\alpha\right>$, we have  $\phi_\beta(\left<\alpha,U\right>\cap p_H^{-1}(x))=\left<\alpha,U\right>\cap p_H^{-1}(x)$.
\end{lemma}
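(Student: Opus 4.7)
The plan is to translate both sides of the biconditional into statements about subsets of $H\backslash \pi_1(X,x_0)$, where they become the same algebraic condition. The key tool is a bijection between the fiber $p_H^{-1}(x)$ and the right coset space $H\backslash \pi_1(X,x_0)$ induced by the basepoint path $\alpha$.

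First, I would define $\Psi: p_H^{-1}(x)\to H\backslash \pi_1(X,x_0)$ by $\Psi(\left<\sigma\right>)=H[\sigma\cdot\alpha^-]$. Well-definedness and injectivity are immediate from $\left<\sigma_1\right>=\left<\sigma_2\right> \Leftrightarrow [\sigma_1\cdot\sigma_2^-]\in H$, and surjectivity follows because any $g=[\eta]\in \pi_1(X,x_0)$ is realized as $[(\eta\cdot\alpha)\cdot\alpha^-]$. Next, I would identify the image of the basic subset: $\left<\sigma\right>\in \left<\alpha,U\right>\cap p_H^{-1}(x)$ iff $\left<\sigma\right>=\left<\alpha\cdot\gamma\right>$ for some loop $\gamma$ at $x$ with image in $U$, iff $[\sigma\cdot\gamma^-\cdot\alpha^-]\in H$, iff $[\sigma\cdot\alpha^-]\in H\cdot [\alpha\cdot\gamma\cdot\alpha^-]$. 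Hence $\Psi(\left<\alpha,U\right>\cap p_H^{-1}(x))$ is precisely the image of $H\,\pi(\alpha,U)$ in $H\backslash \pi_1(X,x_0)$.

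Third, I would identify the monodromies. For a loop $\beta$ at $x$, set $g_\beta:=[\alpha\cdot\beta\cdot\alpha^-]\in \pi_1(X,x_0)$. Then $\Psi(\phi_\beta(\left<\sigma\right>))=\Psi(\left<\sigma\cdot\beta\right>)=H[\sigma\cdot\alpha^-]\,g_\beta$, so $\phi_\beta$ corresponds, under $\Psi$, to right multiplication by $g_\beta$ on $H\backslash \pi_1(X,x_0)$. Moreover $\phi_\beta(\left<\alpha\right>)=\left<\alpha\right>$ iff $g_\beta\in H$, and because conjugation by $\alpha$ gives an isomorphism $\pi_1(X,x)\to \pi_1(X,x_0)$, as $\beta$ ranges over all loops at $x$, $g_\beta$ ranges over all of $\pi_1(X,x_0)$; thus $\{g_\beta : \phi_\beta(\left<\alpha\right>)=\left<\alpha\right>\}=H$.

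Finally, I would combine these observations. The right-hand condition of the lemma becomes: for every $g\in H$, $H\pi(\alpha,U)\cdot g = H\pi(\alpha,U)$; equivalently, $H\pi(\alpha,U)H=H\pi(\alpha,U)$; equivalently, $\pi(\alpha,U)H\subseteq H\pi(\alpha,U)$. By the \emph{Remark} preceding the lemma, this last inclusion is equivalent to $H\pi(\alpha,U)=\pi(\alpha,U)H$. Set equality (rather than mere inclusion) on the monodromy side is automatic because $\phi_{\beta^-}=\phi_\beta^{-1}$ also fixes $\left<\alpha\right>$, so the inclusion $\phi_\beta(\left<\alpha,U\right>\cap p_H^{-1}(x))\subseteq \left<\alpha,U\right>\cap p_H^{-1}(x)$ upgrades to equality. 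The argument is essentially bookkeeping rather than hard analysis; the only subtle point, and the one I would be most careful about, is verifying that $\Psi$ carries $\left<\alpha,U\right>\cap p_H^{-1}(x)$ to precisely the \emph{full} preimage of $\pi(\alpha,U)$ under $\pi_1(X,x_0)\to H\backslash \pi_1(X,x_0)$, i.e.\ to $H\pi(\alpha,U)$, since it is this absorption of $H$ on the left that makes the algebraic translation work cleanly.
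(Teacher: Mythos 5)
Your proposal is correct: every step checks out, including the well-definedness and bijectivity of $\Psi$, the identification $\Psi\bigl(\left<\alpha,U\right>\cap p_H^{-1}(x)\bigr)=$ image of $H\pi(\alpha,U)$ in $H\backslash\pi_1(X,x_0)$, the fact that monodromies fixing $\left<\alpha\right>$ correspond exactly to right translation by elements of $H$ (surjectivity onto $H$ coming from $\beta=\alpha^-\cdot\gamma\cdot\alpha$), and the final reduction $H\pi(\alpha,U)H=H\pi(\alpha,U)\Leftrightarrow\pi(\alpha,U)H\subseteq H\pi(\alpha,U)\Leftrightarrow H\pi(\alpha,U)=\pi(\alpha,U)H$ via the Remark. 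The route is organized differently from the paper's: the paper proves the two implications directly by manipulating explicit path representatives --- in one direction using $\pi(\alpha,U)H\subseteq H\pi(\alpha,U)$ to rewrite $[\alpha\cdot\delta\cdot\beta\cdot\alpha^-]$ and land back in $\left<\alpha,U\right>$, and in the other constructing the loop $\beta=\alpha^-\cdot\gamma\cdot\alpha$ to realize a given $[\gamma]\in H$ as a monodromy fixing $\left<\alpha\right>$ --- whereas you first build the dictionary fiber $\leftrightarrow$ $H\backslash\pi_1(X,x_0)$, basic set $\leftrightarrow$ $H\pi(\alpha,U)$, monodromy $\leftrightarrow$ right multiplication, and then the lemma collapses to pure coset algebra. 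The same computations are hiding inside your dictionary (your second and third steps are essentially the paper's two directions), but your packaging makes transparent \emph{why} the statement is a quasinormality condition: it is stability of the left-$H$-saturated set $H\pi(\alpha,U)$ under right translation by $H$; the paper's direct argument, on the other hand, stays entirely in the path-basis description of $\widetilde{X}_H$, which is the form in which the lemma is actually invoked later (e.g.\ in the proof of Theorem~\ref{thm2}). One small caution: the set-versus-inclusion remark via $\phi_{\beta^-}=\phi_\beta^{-1}$ is not needed in your chain (your coset equivalences already give equality), so you could omit it or note it only as the reason the hypothesis could be weakened to an inclusion.
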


We include the straightforward proof for completeness:

\begin{proof}

(a) Suppose $H\pi(\alpha,U)\supseteq \pi(\alpha,U)H$ and $\phi_\beta(\left<\alpha\right>)=\left<\alpha\right>$. It suffices to show that  $\phi_\beta(\left<\alpha,U\right>\cap p_H^{-1}(x))\subseteq \left<\alpha,U\right>\cap p_H^{-1}(x)$, since $\phi_{\beta^-}=\phi_\beta^{-1}$. Let $\left<\eta\right>\in \left<\alpha,U\right>\cap p_H^{-1}(x)$. Then $\left<\eta\right>=\left<\alpha\cdot \delta\right>$ for some loop $\delta:[0,1]\rightarrow U$ with $\delta(0)=\delta(1)=x$.
We have $[\alpha\cdot \delta \cdot \alpha^-][\alpha\cdot \beta \cdot \alpha^-]\in \pi(\alpha,U)H\subseteq H\pi(\alpha,U)$. Hence, $[\alpha\cdot \delta \cdot \beta \cdot \alpha^-]=[\gamma][\alpha\cdot \widehat{\delta} \cdot \alpha^-]$ for some $[\gamma]\in H$ and some loop $\widehat{\delta}:[0,1]\rightarrow U$ with $\widehat{\delta}(0)=\widehat{\delta}(1)=x$.
 Therefore, $[\alpha\cdot \delta \cdot \beta \cdot  \widehat{\delta}^- \cdot \alpha^-]\in H$ and $\phi_\beta(\left<\eta\right>)=\left<\alpha\cdot \delta\cdot \beta\right>=\left<\alpha\cdot \widehat{\delta}\right>\in \left<\alpha,U\right>\cap p_H^{-1}(x)$.

(b) Suppose $\phi_\beta(\left<\alpha,U\right>\cap p_H^{-1}(x))\subseteq \left<\alpha,U\right>\cap p_H^{-1}(x)$ whenever $\phi_\beta(\left<\alpha\right>)=\left<\alpha\right>$. We show that $H\pi(\alpha,U)\supseteq \pi(\alpha,U)H$. Let $[\tau]\in \pi(\alpha,U)H$. Then $[\tau]=[\alpha\cdot \delta \cdot \alpha^-\cdot \gamma]$ for some loop $\delta:[0,1]\rightarrow U$ with $\delta(0)=\delta(1)=x$ and some $[\gamma]\in H$. Put $\beta=\alpha^-\cdot \gamma\cdot \alpha$. Then $\phi_\beta(\left<\alpha\right>)=\left<\alpha\right>$ and  $\left<\alpha\cdot \delta\right>\in \left<\alpha,U\right>\cap p_H^{-1}(x)$, so that  $\left<\alpha\cdot \delta\cdot \beta\right>=\phi_\beta(\left<\alpha\cdot \delta\right>)\in \left<\alpha,U\right>\cap p_H^{-1}(x)$. Hence, $\left<\alpha\cdot \delta\cdot \beta\right> = \left<\alpha\cdot \widehat{\delta}\right>$ for some loop $\widehat{\delta}:[0,1]\rightarrow U$ with $\widehat{\delta}(0)=\widehat{\delta}(1)=x$, so that
 $[\tau]=[\alpha\cdot \delta\cdot \beta\cdot \alpha^-]=[\alpha\cdot \delta \cdot \beta \cdot \widehat{\delta}^-\cdot \alpha^-][\alpha\cdot \widehat{\delta}\cdot \alpha^-]\in H\pi(\alpha,U)$.
\end{proof}

We note two important instances of local quasinormality:

\begin{lemma}
If  $H \trianglelefteq \pi_1(X,x_0)$ is a normal subgroup or if $p_H:\widetilde{X}_H\rightarrow X$ has discrete fibers, then $H$ is a locally quasinormal subgroup of $\pi_1(X,x_0)$.
\end{lemma}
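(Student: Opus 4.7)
The plan is to treat the two hypotheses separately, as the normal case is essentially trivial and the discrete-fiber case reduces cleanly to the criterion supplied by Lemma~\ref{stable}.

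If $H \trianglelefteq \pi_1(X,x_0)$, then by a standard group-theoretic fact $HK = KH$ for \emph{every} subgroup $K \leq \pi_1(X,x_0)$. In particular, taking $K = \pi(\alpha, U)$, one has $H\pi(\alpha, U) = \pi(\alpha, U) H$ already, so $V = U$ witnesses local quasinormality at $\alpha$ and $U$.

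For the discrete-fiber case, I would fix a path $\alpha : [0,1] \to X$ from $x_0$ to $x$ and an open neighborhood $U$ of $x$, and invoke Lemma~\ref{stable}: it suffices to produce an open $V$ with $x \in V \subseteq U$ such that every monodromy $\phi_\beta : p_H^{-1}(x) \to p_H^{-1}(x)$ fixing $\langle \alpha \rangle$ preserves $\langle \alpha, V \rangle \cap p_H^{-1}(x)$ setwise. Since $p_H^{-1}(x)$ carries the discrete topology, $\{\langle \alpha \rangle\}$ is relatively open in the fiber, so there is a basic open set $\langle \alpha', U' \rangle$ in $\widetilde{X}_H$ with $\langle \alpha \rangle \in \langle \alpha', U' \rangle$ and $\langle \alpha', U' \rangle \cap p_H^{-1}(x) = \{\langle \alpha \rangle\}$. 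As noted at the start of the proof of Lemma~\ref{disconnected}, the membership $\langle \alpha \rangle \in \langle \alpha', U' \rangle$ forces $\langle \alpha, U' \rangle = \langle \alpha', U' \rangle$. Setting $V = U \cap U'$ then gives $\langle \alpha, V \rangle \cap p_H^{-1}(x) = \{\langle \alpha \rangle\}$, and any $\phi_\beta$ fixing $\langle \alpha \rangle$ trivially preserves this singleton, so Lemma~\ref{stable} applies.

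The only mildly delicate point — and the one I expect to be the main obstacle, though it is a small one — is the passage from ``$\langle \alpha \rangle$ is isolated in $p_H^{-1}(x)$'' to ``some basic neighborhood based at $\alpha$ itself isolates $\langle \alpha \rangle$ in the fiber''; this is exactly where the identity $\langle \alpha, U' \rangle = \langle \alpha', U' \rangle$ is needed to replace the witnessing basepoint $\alpha'$ by $\alpha$ before intersecting with $U$. Everything else is formal.
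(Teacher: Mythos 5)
Your proof is correct and follows the route the paper intends: the normal case is immediate from Definition~\ref{LQN} since $HK=KH$ for every subgroup $K$ when $H$ is normal, and the discrete-fiber case is exactly the application of Lemma~\ref{stable} (with the observation from the proof of Lemma~\ref{disconnected} used to re-base the isolating neighborhood at $\alpha$, which the paper leaves implicit). Your write-up simply spells out the details the paper compresses into one line.
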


\begin{proof} This follows from Definition~\ref{LQN} and Lemma~\ref{stable}, respectively. \end{proof}

In this section, we prove the following implication.

\begin{theorem} \label{thm2} Let $X$ be locally path connected. Suppose that $H\leq \pi_1(X,x_0)$ is locally quasinormal and that for every $x,y\in X$ and for every path $\beta:[0,1]\rightarrow X$ from $\beta(0)=x$ to $\beta(1)=y$, the monodromy $\phi_\beta:p_H^{-1}(x)\rightarrow p_H^{-1}(y)$ is continuous.  Then $p_H:\widetilde{X}_H\rightarrow X$ is a fibration.
\end{theorem}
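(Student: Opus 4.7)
The plan is to verify the homotopy lifting property directly. Given continuous $F\colon Y\times [0,1]\to X$ and a lift $\widetilde{f}_0\colon Y\to \widetilde{X}_H$ of $F|_{Y\times\{0\}}$, define
\[
\widetilde{F}(y,t)=\langle \alpha_y\cdot F(y,\cdot)|_{[0,t]}\rangle
\]
where $\alpha_y$ is any path with $\langle \alpha_y\rangle=\widetilde{f}_0(y)$. This is well-defined, lifts $F$, extends $\widetilde{f}_0$, and is continuous in $t$ for each fixed $y$ (being the standard lift of $F(y,\cdot)$ starting at $\widetilde{f}_0(y)$). All the work is in establishing joint continuity at an arbitrary $(y_0,t_0)\in Y\times [0,1]$.

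Fix $(y_0,t_0)$ and a basic neighborhood $\langle \beta,W\rangle$ of $\widetilde{F}(y_0,t_0)$, where $\beta=\alpha_{y_0}\cdot F(y_0,\cdot)|_{[0,t_0]}$. Using local path-connectedness of $X$ and local quasinormality of $H$, replace $W$ by a smaller path-connected $W^\ast\subseteq W$ with $H\pi(\beta,W^\ast)=\pi(\beta,W^\ast)H$. Then construct a partition $0=s_0<s_1<\cdots<s_n=t_0$ together with path-connected open sets $A_i\ni F(y_0,s_i)$ satisfying $H\pi(\beta_i,A_i)=\pi(\beta_i,A_i)H$ (for $\beta_i=\alpha_{y_0}\cdot F(y_0,\cdot)|_{[0,s_i]}$) and $A_n\subseteq W^\ast$, and, for each $i\geq 1$, an open set $B_{i-1}\ni F(y_0,s_{i-1})$ supplied by continuity of the monodromy $\phi_{\mu_i}$ (where $\mu_i=F(y_0,\cdot)|_{[s_{i-1},s_i]}$) such that
\[
\phi_{\mu_i}\bigl(\langle \beta_{i-1},B_{i-1}\rangle\cap p_H^{-1}(F(y_0,s_{i-1}))\bigr)\subseteq \langle \beta_i,A_i\rangle
\]
and $A_{i-1}\cup A_i\cup F(y_0,[s_{i-1},s_i])\subseteq B_{i-1}$. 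Also choose a neighborhood $N$ of $y_0$ in $Y$ such that $\widetilde{f}_0(N)\subseteq \langle \alpha_{y_0},A_0\rangle$, $F(N\times\{s_i\})\subseteq A_i$, and $F(N\times[s_{i-1},s_i])\subseteq B_{i-1}$ for every $i$. Compactness of $[0,t_0]$ ensures that a finite partition suffices.

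Next, prove by induction on $i$ that $\widetilde{F}(y,s_i)\in \langle \beta_i,A_i\rangle$ for every $y\in N$. The base case $i=0$ is the choice of $N$. For the inductive step, write $\widetilde{F}(y,s_{i-1})=\langle \beta_{i-1}\cdot \gamma_y\rangle$ with $\gamma_y\subseteq A_{i-1}\subseteq B_{i-1}$, pick any path $\tau_y\subseteq A_i\subseteq B_{i-1}$ from $F(y_0,s_i)$ to $F(y,s_i)$, and form the loop
\[
\delta_y=\gamma_y\cdot F(y,\cdot)|_{[s_{i-1},s_i]}\cdot \tau_y^-\cdot \mu_i^-
\]
based at $F(y_0,s_{i-1})$. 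By construction $\delta_y\subseteq B_{i-1}$, so continuity of $\phi_{\mu_i}$ yields $\langle \beta_{i-1}\cdot \delta_y\cdot \mu_i\rangle\in \langle \beta_i,A_i\rangle$. Since $\delta_y\cdot \mu_i\simeq \gamma_y\cdot F(y,\cdot)|_{[s_{i-1},s_i]}\cdot \tau_y^-$ rel endpoints (the $\mu_i^-\cdot \mu_i$ cancels), this element equals $\langle \beta_i\cdot \tau_y''\rangle$ for some loop $\tau_y''\subseteq A_i$ based at $F(y_0,s_i)$. Appending $\tau_y$ on the right then gives $\widetilde{F}(y,s_i)=\langle \beta_i\cdot (\tau_y''\cdot \tau_y)\rangle\in \langle \beta_i,A_i\rangle$. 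Taking $i=n$ yields $\widetilde{F}(y,t_0)\in \langle \beta,A_n\rangle\subseteq \langle \beta,W^\ast\rangle$ for all $y\in N$, and combining this with continuity of $\widetilde{F}$ in $t$ together with $F(N\times I)\subseteq W^\ast$ for a small interval $I\ni t_0$ gives a neighborhood $N\times I$ of $(y_0,t_0)$ with $\widetilde{F}(N\times I)\subseteq \langle \beta,W\rangle$.

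The main obstacle is the partition construction in the second paragraph: arranging finitely many compatible triples $(s_i,A_i,B_{i-1})$ so that local quasinormality holds at each $\beta_i$, continuity of each $\phi_{\mu_i}$ produces a $B_{i-1}$ of the desired form, and the nesting $A_{i-1}\cup A_i\cup F(y_0,[s_{i-1},s_i])\subseteq B_{i-1}$ all hold simultaneously. This requires interleaving local quasinormality, continuity of each intermediate monodromy, and a Lebesgue-number style compactness argument on $[0,t_0]$.
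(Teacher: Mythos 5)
Your overall scheme (define $\widetilde{F}(y,t)=\langle\alpha_y\cdot F(y,\cdot)|_{[0,t]}\rangle$ and verify joint continuity at $(y_0,t_0)$) matches the paper, and your inductive step, \emph{given} the triples $(s_i,A_i,B_{i-1})$, is mechanically correct. But the construction you defer to the last paragraph is not a routine ``Lebesgue-number style'' argument; it is the actual content of the proof, and as specified it contains a circular dependency that you have not resolved. The set $B_{i-1}$ is supposed to be produced by continuity of $\phi_{\mu_i}$ at $\langle\beta_{i-1}\rangle$ with respect to the target $\langle\beta_i,A_i\rangle$, where $\mu_i$ itself depends on the partition points $s_{i-1},s_i$; continuity only guarantees the existence of \emph{some}, possibly very small, such $B_{i-1}$, and the defining inclusion is not preserved if you enlarge $B_{i-1}$ afterwards, nor if you shrink the target $A_i$ or refine the partition (both of which change $\mu_i$ and hence invalidate the chosen $B_{i-1}$). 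Yet you simultaneously demand $A_{i-1}\cup A_i\cup F(y_0,[s_{i-1},s_i])\subseteq B_{i-1}$, i.e.\ that this uncontrollably small neighborhood contain a prescribed neighborhood of a \emph{different} point together with an entire image arc. Nothing in the hypotheses (continuity of all monodromies, local path connectedness, local quasinormality) yields such a uniformly nested chain, and no compactness argument obviously produces one; this is precisely where the proof has a genuine gap.

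A symptom of the problem is that local quasinormality is never actually used in your argument: you require $H\pi(\beta_i,A_i)=\pi(\beta_i,A_i)H$ and $H\pi(\beta,W^\ast)=\pi(\beta,W^\ast)H$ but the inductive step invokes only continuity of $\phi_{\mu_i}$, so if your triples always existed the theorem would follow without that hypothesis. The paper's proof avoids the circularity by using, for every $t\in[0,t_0]$, the monodromy $\phi_{\beta_{t,t_0}}$ all the way into the single terminal fiber $p_H^{-1}(\beta(t_0))$ with one fixed target $\langle\alpha\cdot\beta_{t_0},U\rangle$; since the target never changes, the neighborhoods $U_t$ can be chosen first, the finite subdivision afterwards by compactness, and Lemma~\ref{slide} transfers the containment from the sample points $t_i$ to the subdivision points $s_i$. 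The price is that the comparison at each $s_i$ produces an error of the form $h_ia_i$ with $h_i\in H$ and $a_i\in\pi(\alpha\cdot\beta_{t_0},U)$, and the interleaved product $h_1a_1\cdots h_{n-1}a_{n-1}$ can only be collapsed to $ha$ with $h\in H$, $a\in\pi(\alpha\cdot\beta_{t_0},U)$ because $H\pi(\alpha\cdot\beta_{t_0},U)=\pi(\alpha\cdot\beta_{t_0},U)H$ --- that is exactly the role of local quasinormality that your outline bypasses. To repair your argument you would either have to prove that your nested triples exist (unlikely in this generality) or restructure it along these lines, transporting all errors to the endpoint and using quasinormality there.
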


\begin{proof}
Let $Y$ be any topological space and let $\widetilde{f}:Y\rightarrow \widetilde{X}_H$ and $F:Y\times [0,1]\rightarrow X$ be continuous maps such that $p_H\circ \widetilde{f}(y)=F(y,0)$ for every $y\in Y$.

We define a function $\widetilde{F}:Y\times [0,1]\rightarrow \widetilde{X}_H$ as follows.
For each $y\in Y$, choose a path $\alpha_y:([0,1],0)\rightarrow (X,x_0)$ such that $\widetilde{f}(y)=\left<\alpha_y\right>$. For $y\in Y$ and $t_1, t_2\in [0,1]$ let $\beta_{y,t_1,t_2}:[0,1]\rightarrow X$ be the path defined by $\beta_{y,t_1,t_2}(s)=F(y,t_1+s(t_2-t_1))$. For $t\in [0,1]$, put $\beta_{y,t}=\beta_{y,0,t}$. Since $\alpha_y(1)=p_H\circ \widetilde{f}(y)=F(y,0)=\beta_{y,t}(0)$, we may define $\widetilde{F}(y,t)=\left<\alpha_y\cdot \beta_{y,t}\right>$.
Then $\widetilde{F}(y,0)=\left<\alpha_y\right>=\widetilde{f}(y)$ for every $y\in Y$ and $p_H\circ \widetilde{F}(y,t)=\beta_{y,t}(1)=F(y,t)$ for every $(y,t)\in Y\times[0,1]$. It remains to show that $\widetilde{F}$ is continuous.

Let $(y_0,t_0)\in Y\times [0,1]$. For simplicity, put $\alpha=\alpha_{y_0}$, $\beta_{t_1,t_2}=\beta_{y_0,t_1,t_2}$, $\beta_t=\beta_{y_0,t}$ and $\beta=\beta_{y_0,1}$. Then $\widetilde{F}(y_0,t_0)=\left<\alpha\cdot \beta_{t_0}\right>$.
Let $U$ be an open neighborhood of $\beta_{t_0}(1)=F(y_0,t_0)=\beta(t_0)$ in $X$. We wish to find open subsets $M\subseteq Y$ and $I\subseteq [0,1]$ with $(y_0,t_0)\in M\times I$ such that $\widetilde{F}(M\times I)\subseteq \left<\alpha \cdot \beta_{t_0} ,U\right>$. For this, since $H$ is locally quasinormal, we may assume that $H\pi(\alpha\cdot \beta_{t_0} ,U)=\pi(\alpha \cdot \beta_{t_0},U) H$.

For each $t\in [0,t_0]$, the monodromy $\phi_{\beta_{t,t_0}}:p_H^{-1}(\beta(t))\rightarrow p_H^{-1}(\beta(t_0))$ is continuous, so that there is an open neighborhood $U_t$ of $\beta(t)$ in $X$ such that \begin{equation} \label{star} \phi_{\beta_{t,t_0}}(\left<\alpha\cdot \beta_t,U_t\right>\cap p_H^{-1}(\beta(t)))\subseteq \left<\alpha\cdot \beta_{t_0},U\right>\cap p_H^{-1}(\beta(t_0)). \end{equation} We may assume that $U_{t_0}=U$. For each $t\in [0,t_0]$, choose an interval $W_t$, open in $[0,t_0]$, with  $t\in W_t\subseteq cl(W_t)\subseteq \beta^{-1}(U_t)$. Say, $cl(W_{t_0})=[w_0,t_0]$. Since $[0,w_0]$ is compact, there is a finite subdivision $0=s_0<s_1<\cdots <s_{n-1}=w_0$ such that for each $1\leq i \leq n-1$, there is a $t_i\in [0,w_0]$ with $[s_{i-1},s_i]\subseteq W_{t_i}$.
 For $1\leq i \leq n-1$, put $U_i=U_{t_i}$.
 By (\ref{star}) and Lemma~\ref{slide}, for  $1\leq i \leq n-1$, we have  \begin{equation}\label{starstar} \phi_{\beta_{s_i,t_0}}(\left<\alpha\cdot \beta_{s_i},U_i\right>\cap p_H^{-1}(\beta(s_i)))\subseteq \left<\alpha\cdot \beta_{t_0},U\right>\cap p_H^{-1}(\beta(t_0)).\end{equation} Put $s_n=t_0$ and $U_n=U$.

   Now, $\beta([s_{i-1},s_i])\subseteq U_i$ for $1\leq i \leq n$. For each $1\leq i \leq n-1$, choose a path-connected open subset $V_i$ of $X$ such that $\beta(s_i)\in V_i \subseteq U_i\cap U_{i+1}$.

As in the Tube Lemma \cite[Lemma~26.8]{Munkres}, for every $1\leq i \leq n$, there is an open subset $N_i\subseteq Y$ with $y_0\in N_i$ and an interval $I_i$, open in $[0,1]$, with $[s_{i-1},s_i]\subseteq I_i$ such that $F(N_i\times I_i)\subseteq U_i$. For each $1\leq i \leq n-1$, choose an open subset $M_i\subseteq Y$ with $y_0\in M_i$ and an interval $J_i$, open in $[0,1]$, with $s_i\in J_i$ such that $F(M_i\times J_i)\subseteq V_i$. Since $\widetilde{f}(y_0)=\left<\alpha\right>$ and $\alpha(1)\in U_1$, and since $\widetilde{f}$ is continuous, there is an open subset $M_0\subseteq Y$ with $y_0\in M_0$ and $\widetilde{f}(M_0)\subseteq \left<\alpha,U_1\right>$. Put $M=\Big(\bigcap_{i=1}^n N_i\Big)\cap \Big(\bigcap_{i=0}^{n-1} M_i\Big)$ and choose $I$ to be an interval, open in $[0,1]$, with $s_n\in I\subseteq (s_{n-1},1]\cap I_n$.

Let $(y,t)\in M\times I$. For notational convenience, put $\gamma_t=\beta_{y,t}$, put $\lambda_i=\beta_{y,s_{i-1},s_i}$ for $1\leq i \leq n-1$, and put $\lambda_n=\beta_{y,s_{n-1},t}$. Then $\gamma_t=\lambda_1\cdot \lambda_2 \cdot \cdots \cdot \lambda_n$ with $\lambda_i([0,1])\subseteq U_i$ for $1\leq i \leq n$ and $\lambda_i(1)\in V_i$ for $1\leq i \leq n-1$. Similarly, put $\delta_i=\beta_{s_{i-1},s_i}$ for $1\leq i \leq n$, so that $\beta_{t_0}=\delta_1\cdot \delta_2\cdot \cdots \cdot \delta_n$ with $\delta_i([0,1])\subseteq U_i$ for $1\leq i \leq n$ and $\delta_i(1)\in V_i$ for $1\leq i \leq n-1$. For each $1\leq i \leq n-1$, choose a path $\eta_i:[0,1]\rightarrow V_i\subseteq U_i\cap U_{i+1}$ from $\eta_i(0)=\delta_i(1)$ to $\eta_i(1)=\lambda_i(1)$.

Since $\widetilde{f}(y)\in \left<\alpha,U_1\right>$, there is a path $\eta_0:([0,1],0)\rightarrow (U_1,\alpha(1))$ such that $\left<\alpha_y\right>=\widetilde{f}(y)=\left<\alpha\cdot \eta_0\right>$. Hence, $\widetilde{F}(y,t)=\left<\alpha_y\cdot \beta_{y,t}\right>=\left<\alpha\cdot \eta_0\cdot \lambda_1 \cdot \lambda _2 \cdot \cdots \cdot \lambda_n\right>$. We wish to show that $\widetilde{F}(y,t)\in \left<\alpha\cdot \beta_{t_0},U\right>=\left<\alpha\cdot \delta_1\cdot \delta_2\cdot \cdots \cdot \delta_n,U\right>$. That is, we wish to find a path $\epsilon:([0,1],0)\rightarrow (U,\beta(t_0))$ and an element $h\in H$ such that \begin{equation}\label{goal} [\alpha\cdot \eta_0 \cdot \lambda_1 \cdot \lambda _2 \cdot \cdots \cdot \lambda_n]=h[\alpha\cdot \delta_1\cdot \delta_2\cdot \cdots \cdot \delta_n\cdot \epsilon].\end{equation}

Note that $\left<\alpha\cdot \delta_1 \cdot  \delta_1^-\cdot \eta_0 \cdot \lambda_1\cdot \eta_1^-\right>\in \left<\alpha\cdot \beta_{s_1},U_1\right>\cap p_H^{-1}(\beta(s_1))$. By (\ref{starstar}), we have $\left<\alpha\cdot \delta_1\cdot \delta_1^-\cdot \eta_0\cdot \lambda_1\cdot \eta_1^-\cdot \delta_2\cdot \delta_3 \cdot \cdots \cdot \delta_n\right>=\phi_{\delta_2\cdot \delta_3\cdot \cdots \cdot \delta_n}(\left<\alpha\cdot \delta_1 \cdot \delta_1^-\cdot \eta_0 \cdot \lambda_1\cdot \eta_1^-\right>)=\left<\alpha\cdot \beta_{t_0}\cdot \epsilon_1\right>$ for some loop $\epsilon_1:[0,1]\rightarrow U$ with $\epsilon_1(0)=\epsilon_1(1)=\beta(t_0)$. Hence, we have $[\alpha\cdot \eta_0\cdot \lambda_1\cdot \eta_1^- \cdot \delta_2\cdot \delta_3\cdot \cdots \cdot \delta_n]=h_1[\alpha\cdot \beta_{t_0}\cdot \epsilon_1]$ for some $h_1\in H$. Put $a_1=[\alpha\cdot \beta_{t_0}\cdot \epsilon_1 \cdot \beta_{t_0}^-\cdot \alpha^-]\in\pi(\alpha\cdot \beta_{t_0},U)$. Then
$[\alpha\cdot \eta_0\cdot \lambda_1]=h_1a_1[\alpha\cdot \delta_1\cdot\eta_1]$ so that
\[[\alpha\cdot \eta_0 \cdot \lambda_1 \cdot \lambda _2 \cdot \cdots \cdot \lambda_n]=h_1a_1[\alpha\cdot \delta_1\cdot \eta_1 \cdot \lambda_2 \cdot \lambda_3 \cdot \cdots \cdot \lambda_n].\]
Similarly, $\left<\alpha\cdot \delta_1\cdot \delta_2 \cdot \delta_2^- \cdot \eta_1\cdot \lambda_2 \cdot \eta_2^-\right>\in \left<\alpha\cdot \beta_{s_2},U_2\right>\cap p_H^{-1}(\beta(s_2))$. By (\ref{starstar}), \linebreak $\left<\alpha\cdot \delta_1\cdot \delta_2 \cdot \delta_2^- \cdot \eta_1\cdot \lambda_2 \cdot \eta_2^-\cdot \delta_3 \cdot  \cdots \cdot \delta_n\right>=\phi_{\delta_3\cdot \cdots \cdot \delta_n}(\left<\alpha\cdot \delta_1\cdot \delta_2 \cdot \delta_2^- \cdot \eta_1\cdot \lambda_2 \cdot \eta_2^-\right>)=\left<\alpha\cdot \beta_{t_0}\cdot \epsilon_2\right>$ for some loop $\epsilon_2:[0,1]\rightarrow U$ with $\epsilon_2(0)=\epsilon_2(1)=\beta(t_0)$. Hence, $[\alpha\cdot \delta_1\cdot \eta_1\cdot \lambda_2 \cdot \eta_2^-\cdot \delta_3 \cdot \cdots \cdot \delta_n]=h_2[\alpha\cdot \beta_{t_0}\cdot \epsilon_2]$ for some $h_2\in H$. Put
$a_2=[\alpha\cdot \beta_{t_0}\cdot \epsilon_2 \cdot \beta_{t_0}^-\cdot \alpha^-]\in\pi(\alpha\cdot \beta_{t_0},U)$. Then
$[\alpha\cdot \delta_1\cdot \eta_1\cdot \lambda_2]=h_2a_2[\alpha\cdot \delta_1\cdot \delta_2 \cdot \eta_2]$ so that
\[[\alpha\cdot \eta_0\cdot \lambda_1\cdot \lambda_2\cdot \cdots \cdot \lambda_n]=h_1a_1h_2a_2[\alpha\cdot \delta_1\cdot \delta_2 \cdot \eta_2\cdot \lambda_3 \cdot \lambda_4\cdot \cdots \cdot \lambda_n].\]
Inductively, we find $h_1, h_2, \dots , h_{n-1}\in H$ and loops $\epsilon_1, \epsilon_2,\dots, \epsilon_{n-1}:[0,1]\rightarrow U$ with $\epsilon_i(0)=\epsilon_i(1)=\beta(t_0)$ so that \[[\alpha\cdot \eta_0\cdot \lambda_1\cdot \lambda_2\cdot \cdots \cdot \lambda_n]=h_1a_1h_2a_2\cdots h_{n-1}a_{n-1} [\alpha\cdot \delta_1\cdot \delta_2 \cdot \cdots \cdot \delta_{n-1}\cdot \eta_{n-1}  \cdot \lambda_n]\] with
$a_i=[\alpha\cdot \beta_{t_0}\cdot \epsilon_i \cdot \beta_{t_0}^-\cdot \alpha^-]\in\pi(\alpha\cdot \beta_{t_0},U)$.

Since $\pi(\alpha \cdot \beta_{t_0},U) H= H\pi(\alpha\cdot \beta_{t_0} ,U)$, there is an $h\in H$ and a loop $\epsilon_0:[0,1] \rightarrow U$ with $\epsilon_0(0)=\epsilon_0(1)=\beta(t_0)$ such that $h_1a_1h_2a_2\cdots h_{n-1}a_{n-1}=ha$, where $a=[\alpha\cdot \beta_{t_0} \cdot \epsilon_0 \cdot \beta_{t_0}^-\cdot \alpha^-]\in \pi(\alpha \cdot \beta_{t_0},U)$. Put $\epsilon=\epsilon_0\cdot \delta_n^-\cdot \eta_{n-1}\cdot \lambda_n$. Then the path $\epsilon:([0,1],0)\rightarrow (U,\beta(t_0))$  and the element $h\in H$ satisfy (\ref{goal}).
\end{proof}

\begin{lemma}\label{slide} Let $H\leq \pi_1(X,x_0)$ and  $\left<\alpha\cdot \beta \cdot \gamma\right>\in \widetilde{X}_H$.  Let $U,V\subseteq X$ be open subsets with $\beta([0,1])\subseteq V$ and $\gamma(1)\in U$.  If $\phi_{\beta\cdot \gamma}(\left<\alpha,V\right>\cap p_H^{-1}(\beta(0)))\subseteq \left<\alpha\cdot \beta\cdot \gamma,U\right>$, then $\phi_{\gamma}(\left<\alpha\cdot \beta,V\right>\cap p_H^{-1}(\gamma(0)))\subseteq \left<\alpha\cdot \beta\cdot \gamma,U\right>$.
\end{lemma}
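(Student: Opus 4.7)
The plan is to reduce an element in $\phi_{\gamma}(\left<\alpha\cdot \beta,V\right>\cap p_H^{-1}(\gamma(0)))$ to one in $\phi_{\beta\cdot\gamma}(\left<\alpha,V\right>\cap p_H^{-1}(\beta(0)))$ by conjugating a loop in $V$ based at $\gamma(0)$ to one based at $\beta(0)$, using $\beta$ itself as the conjugating path.

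First I would unpack the targets. A typical element of $\left<\alpha\cdot \beta,V\right>\cap p_H^{-1}(\gamma(0))$ has the form $\left<\alpha\cdot \beta \cdot \delta'\right>$, where $\delta':[0,1]\rightarrow V$ is a loop at $\beta(1)=\gamma(0)$. Its image under $\phi_{\gamma}$ is $\left<\alpha\cdot \beta\cdot \delta'\cdot \gamma\right>$. Meanwhile, a typical element of $\left<\alpha,V\right>\cap p_H^{-1}(\beta(0))$ has the form $\left<\alpha\cdot \delta\right>$, with $\delta:[0,1]\rightarrow V$ a loop at $\alpha(1)=\beta(0)$, and its image under $\phi_{\beta\cdot \gamma}$ is $\left<\alpha\cdot \delta\cdot \beta\cdot \gamma\right>$.

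Next, given $\delta'$, I would set $\delta=\beta\cdot \delta'\cdot \beta^-$. Because $\beta([0,1])\subseteq V$ and $\delta'([0,1])\subseteq V$, the loop $\delta$ also takes values in $V$, is based at $\beta(0)$, and so $\left<\alpha\cdot \delta\right>\in \left<\alpha,V\right>\cap p_H^{-1}(\beta(0))$. The key observation is the path homotopy
\[
\alpha\cdot \delta \cdot \beta\cdot \gamma \;=\; \alpha\cdot \beta\cdot \delta'\cdot \beta^-\cdot \beta\cdot \gamma \;\simeq\; \alpha\cdot \beta\cdot \delta'\cdot \gamma,
\]
which gives the equality $\left<\alpha\cdot \beta\cdot \delta'\cdot \gamma\right>=\left<\alpha\cdot \delta\cdot \beta\cdot \gamma\right>$ in $\widetilde{X}_H$. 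Hence
\[
\phi_{\gamma}(\left<\alpha\cdot \beta\cdot \delta'\right>)\;=\;\left<\alpha\cdot \delta\cdot \beta\cdot \gamma\right>\;=\;\phi_{\beta\cdot \gamma}(\left<\alpha\cdot \delta\right>),
\]
and by the hypothesis the right-hand side lies in $\left<\alpha\cdot \beta\cdot \gamma,U\right>$, completing the proof.

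There is really no serious obstacle here; the only thing to notice is the right choice of conjugating loop, and it is forced by the requirement that we must stay inside $V$ (which is why the hypothesis $\beta([0,1])\subseteq V$ is exactly what we need). The argument is a small path-homotopy bookkeeping step that justifies the informal picture of ``sliding'' the parameter of the basis element from $\beta(0)$ forward to $\gamma(0)$ along $\beta$.
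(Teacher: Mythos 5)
Your proof is correct and is essentially the paper's own argument: the paper also rewrites an element $\left<\alpha\cdot\beta\cdot\delta\right>$ of $\left<\alpha\cdot\beta,V\right>\cap p_H^{-1}(\gamma(0))$ via $\phi_\gamma(\left<\alpha\cdot\beta\cdot\delta\right>)=\phi_{\beta\cdot\gamma}(\left<\alpha\cdot\beta\cdot\delta\cdot\beta^-\right>)$, conjugating the small loop along $\beta$ so that it stays in $V$ and then invoking the hypothesis. Your write-up just makes the underlying path homotopy explicit.
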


\begin{proof}
Let $\left<\eta\right>\in \left<\alpha\cdot \beta,V\right>\cap p_H^{-1}(\gamma(0))$. Then $\left<\eta\right>=\left<\alpha\cdot \beta\cdot \delta\right>$ for some loop $\delta$ in $V$. So, $\phi_\gamma(\left<\eta\right>)=\phi_{\beta\cdot \gamma}(\left<\alpha\cdot \beta\cdot \delta \cdot \beta^-\right>)\in \left<\alpha\cdot \beta\cdot \gamma,U\right>$.
\end{proof}

 \section{The homotopically path Hausdorff property}\label{HpH}

\begin{definition}\label{HpHDef}
We call $X$ {\em homotopically path Hausdorff} relative to a subgroup $H\leq \pi_1(X,x_0)$ if for every two paths $\alpha,\beta:([0,1],0)\rightarrow (X,x_0)$ with $\alpha(1)=\beta(1)$ and $[\alpha\cdot \beta^-]\not \in H$, there is a partition $0=t_0<t_1<\cdots < t_n=1$ of $[0,1]$ and open subsets $U_1, U_2, \dots, U_n$ of $X$ with $\alpha([t_{i-1},t_i])\subseteq U_i$ for all $1\leq i\leq n$ and such that if $\gamma:[0,1]\rightarrow X$ is any path with $\gamma([t_{i-1},t_i])\subseteq U_i$ for all $1\leq i\leq n$ and with $\gamma(t_i)=\alpha(t_i)$ for all $0\leq i \leq n$, then $[\gamma\cdot \beta^-]\not\in H$.
\end{definition}

\begin{Remark} If $X$ is homotopically path Hausdorff relative to $H\leq \pi_1(X,x_0)$, then $p_H:\widetilde{X}_H\rightarrow X$ has unique path lifting \cite{BFa,FRVZ}. The converse does not hold in general. The exact difference between these two notions is discussed in \cite{BFi}.
\end{Remark}

\begin{definition}\label{LQNC}
We call a subgroup $H\leq \pi_1(X,x_0)$ {\em locally quasinormal at the constant path} if for every open neighborhood $U$ of $x_0$ in $X$, there is an open neighborhood $V$ of $x_0$ in $X$ with $x_0\in V \subseteq U$ such that $H \pi(c,V)=\pi(c,V)H$, where $c:[0,1]\rightarrow \{x_0\}$ denotes the constant path at $x_0$.
\end{definition}

\begin{Remark}
If $H\leq \pi_1(X,x_0)$ is locally quasinormal, then $H$ is locally quasinormal at the constant path.
\end{Remark}

In this section, we prove the following implication.

\begin{theorem}\label{thm3} Let $H\leq \pi_1(X,x_0)$ be locally quasinormal at the constant path. Suppose that $p_H^{-1}(x_0)$ is $T_1$ and that for every $x\in X$ and every path $\beta:[0,1]\rightarrow X$ from $\beta(0)=x$ to $\beta(1)=x_0$, the monodromy $\phi_\beta:p_H^{-1}(x)\rightarrow p_H^{-1}(x_0)$ is continuous. Then $X$ is homotopically path Hausdorff relative to $H$.
\end{theorem}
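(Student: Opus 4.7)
The plan is to build a subgroup $K$ with $H\leq K\leq\pi_1(X,x_0)$ and $[\alpha\cdot\beta^-]\notin K$, and then to produce a subdivision that forces $[\gamma\cdot\alpha^-]\in K$ for every competitor $\gamma$; this combination will yield $[\gamma\cdot\beta^-]\notin H$.

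First I would use the hypothesis that $p_H^{-1}(x_0)$ is $T_1$: since $\langle\alpha\cdot\beta^-\rangle\neq\langle c\rangle$ in this fiber (where $c$ is the constant path at $x_0$), I choose an open neighborhood $U\ni x_0$ with $\langle\alpha\cdot\beta^-\rangle\notin\langle c,U\rangle$. Local quasinormality at the constant path lets me shrink $U$ so that $K:=H\cdot\pi(c,U)=\pi(c,U)\cdot H$ is a subgroup of $\pi_1(X,x_0)$. Unpacking the membership condition in $\langle c,U\rangle$ shows that the choice of $U$ forces $[\alpha\cdot\beta^-]\notin K$.

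Next, writing $\alpha_t=\alpha|_{[0,t]}$, I note $\phi_{\alpha_t^-}(\langle\alpha_t\rangle)=\langle c\rangle\in\langle c,U\rangle$, so continuity of $\phi_{\alpha_t^-}$ yields an open neighborhood $V_t\subseteq X$ of $\alpha(t)$ with
\[
\phi_{\alpha_t^-}\bigl(\langle\alpha_t,V_t\rangle\cap p_H^{-1}(\alpha(t))\bigr)\subseteq\langle c,U\rangle.
\]
Using compactness of $[0,1]$ for the cover $\{\alpha^{-1}(V_t)\}_{t\in[0,1]}$, together with Lemma~\ref{slide} to transport the inclusion above from the Lebesgue representatives onto the chosen partition points (the same bookkeeping device used in the proof of Theorem~\ref{thm2}), I obtain a partition $0=s_0<s_1<\cdots<s_n=1$ and open sets $U_i\supseteq\alpha([s_{i-1},s_i])$ satisfying
\[
\phi_{\alpha_{s_{i-1}}^-}\bigl(\langle\alpha_{s_{i-1}},U_i\rangle\cap p_H^{-1}(\alpha(s_{i-1}))\bigr)\subseteq\langle c,U\rangle\quad(1\leq i\leq n).
\]
This consolidation of per-parameter continuity onto a single finite partition is the main technical hurdle, since the continuity hypothesis attaches an open neighborhood to each $t$ separately, and a common subdivision must be extracted.

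Finally, suppose $\gamma:[0,1]\to X$ satisfies $\gamma([s_{i-1},s_i])\subseteq U_i$ and $\gamma(s_i)=\alpha(s_i)$ for every $i$, and set $\delta_i=\gamma|_{[s_{i-1},s_i]}\cdot(\alpha|_{[s_{i-1},s_i]})^-$, a loop at $\alpha(s_{i-1})$ in $U_i$. A standard telescoping identity gives
\[
[\gamma\cdot\alpha^-]=q_1\cdot q_2\cdots q_n,\qquad q_i:=[\alpha_{s_{i-1}}\cdot\delta_i\cdot\alpha_{s_{i-1}}^-]\in\pi(\alpha_{s_{i-1}},U_i).
\]
Since $\langle\alpha_{s_{i-1}}\cdot\delta_i\rangle$ lies in $\langle\alpha_{s_{i-1}},U_i\rangle\cap p_H^{-1}(\alpha(s_{i-1}))$, the inclusion from the previous paragraph places $\phi_{\alpha_{s_{i-1}}^-}(\langle\alpha_{s_{i-1}}\cdot\delta_i\rangle)=\langle\alpha_{s_{i-1}}\cdot\delta_i\cdot\alpha_{s_{i-1}}^-\rangle$ in $\langle c,U\rangle$, and this unpacks to $q_i\in H\cdot\pi(c,U)=K$. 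Because $K$ is a subgroup, $[\gamma\cdot\alpha^-]\in K$, whence $[\gamma\cdot\beta^-]=[\gamma\cdot\alpha^-][\alpha\cdot\beta^-]$ cannot belong to $H$: otherwise $[\alpha\cdot\beta^-]=[\gamma\cdot\alpha^-]^{-1}[\gamma\cdot\beta^-]\in K$, contradicting the choice of $U$. This verifies the homotopically path Hausdorff condition relative to $H$ using the partition $(s_i)$ and the open sets $(U_i)$.
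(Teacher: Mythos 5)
Your proof is correct and follows essentially the same route as the paper's: separate $\left<\alpha\cdot\beta^-\right>$ from $\left<c\right>$ in the $T_1$ fiber over $x_0$, shrink $U$ using local quasinormality at the constant path, use continuity of the monodromies $\phi_{\alpha_t^-}$ together with compactness and Lemma~\ref{slide} to consolidate onto a finite partition, and then show that each telescoping factor of $[\gamma\cdot\alpha^-]$ lies in $H\pi(c,U)$. Your packaging of the last step via the subgroup $K=H\pi(c,U)=\pi(c,U)H$ is only a cosmetic streamlining of the paper's collapse of the alternating product $[\epsilon_0]h_1[\epsilon_1]\cdots$ using the same quasinormality identity.
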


\begin{proof}
Let $\alpha, \beta:([0,1],0)\rightarrow (X,x_0)$ be two paths with $\alpha(1)=\beta(1)$ and $[\alpha\cdot \beta^-]\not\in H$. Let $c:[0,1]\rightarrow \{x_0\}$ denote the constant path at $x_0$. Then $\left<c\right>$ and $\left<\beta\cdot \alpha^-\right>$ are distinct elements of the fiber $p_H^{-1}(x_0)$. Hence, there is an open neighborhood $U$ of $x_0$ in $X$ such that $\left<\beta\cdot \alpha^-\right>\not\in \left<c,U\right>\cap p_H^{-1}(x_0)$. We may assume that $H\pi(c,U)=\pi(c,U)H$. Put $\alpha_t(s)=\alpha(ts)$.

For every $t\in [0,1]$, the monodromy $\phi_{\alpha_t^-}:p_H^{-1}(\alpha(t))\rightarrow p_H^{-1}(x_0)$ is continuous. Hence, for each $t\in [0,1]$, there is an open subset $V_t\subseteq X$ with $\alpha(t)\in V_t$ such that \[\phi_{\alpha_t^-}(\left<\alpha_t,V_t\right>\cap p_H^{-1}(\alpha(t)))\subseteq \left<c,U\right>\cap p_H^{-1}(x_0).\] As in the proof of Theorem~\ref{thm2}, we find a
 subdivision $0=t_0<t_1<\cdots <t_n=1$ and open subsets $V_0, V_1, \dots, V_{n-1}\subseteq X$ such that, for every $0\leq i \leq n-1$, we have $\alpha([t_i,t_{i+1}])\subseteq V_i$ and \[\phi_{\alpha_{t_i}^-}(\left<\alpha_{t_i},V_i\right>\cap p_H^{-1}(\alpha(t_i)))\subseteq \left<c,U\right>\cap p_H^{-1}(x_0),\] where $V_0=U$.

Now, let $\gamma:[0,1]\rightarrow X$ be any path such that $\gamma([t_i,t_{i+1}])\subseteq V_i$ for $0\leq i \leq n-1$ and $\gamma(t_i)=\alpha(t_i)$ for $0\leq i \leq n$. We wish to show that $[\gamma\cdot \beta^-]\not\in H$.

For $1\leq i \leq n$, define paths $\delta_i, \lambda_i:[0,1]\rightarrow X$ by $\delta_i(s)=\alpha(t_{i-i}+s(t_i-t_{i-1}))$ and $\lambda_i(s)=\gamma(t_{i-i}+s(t_i-t_{i-1}))$. Then $\alpha_{t_i}=\delta_1\cdot \delta_2\cdot \cdots \cdot \delta_i$ and $\gamma=\lambda_1\cdot \lambda_2\cdot \cdots \cdot \lambda_n$.

Since $\left<\delta_1\cdot \lambda_2\cdot \delta_2^-\right>\in\left<\delta_1,V_1\right>\cap p_H^{-1}(\alpha(t_1))$, we have \[\left<\delta_1\cdot \lambda_2\cdot \delta_2^-\cdot \delta_1^-\right>=\phi_{\delta_1^-}(\left<\delta_1\cdot \lambda_2\cdot \delta_2^-\right>)\in \left<c,U\right>\cap p_H^{-1}(x_0).\] Then $\left<\delta_1\cdot \lambda_2\cdot \delta_2^-\cdot \delta_1^-\right>=\left<\epsilon_1\right>$ for some loop $\epsilon_1:[0,1]\rightarrow U$ with $\epsilon_1(0)=\epsilon_1(1)=x_0$. Hence, $[\delta_1\cdot \lambda_2\cdot \delta_2^-\cdot \delta_1^-]=h_1[\epsilon_1]$ for some $h_1\in H$.

Since $\left<\delta_1\cdot \delta_2\cdot \lambda_3\cdot \delta_3^-\right>\in \left<\delta_1\cdot \delta_2, V_2\right>\cap p_H^{-1}(\alpha(t_2))$, we have \[\left<\delta_1\cdot \delta_2\cdot \lambda_3\cdot \delta_3^-\cdot \delta_2^-\cdot \delta_1^- \right>=\phi_{\delta_2^-\cdot \delta_1^-}(\left<\delta_1\cdot \delta_2\cdot \lambda_3\cdot \delta_3^-\right>)\in \left<c,U\right>\cap p_H^{-1}(x_0).\] Consequently, $[\delta_1\cdot \delta_2\cdot \lambda_3\cdot \delta_3^-\cdot \delta_2^-\cdot \delta_1^-]=h_2[\epsilon_2]$ for some $h_2\in H$ and some loop $\epsilon_2:[0,1]\rightarrow U$ with $\epsilon_2(0)=\epsilon_2(1)=x_0$.
Similarly, for every $1\leq i\leq n-1$, there is an $h_i\in H$ and a loop $\epsilon_i:[0,1]\rightarrow U$ with $\epsilon_i(0)=\epsilon_i(1)=x_0$ such that \[[\delta_1\cdot \cdots \cdot \delta_i\cdot \lambda_{i+1}\cdot \delta_{i+1}^-\cdot \delta_i^-\cdot \cdots \cdot \delta_1^-]=h_i[\epsilon_i].\]
We also put $\epsilon_0=\lambda_1\cdot \delta_1^-$. Then $[\epsilon_i]\in \pi(c,U)$ for all $0\leq i \leq n-1$ and \[[\gamma\cdot \alpha^-]=[\epsilon_0]h_1[\epsilon_1]h_2 \cdots [\epsilon_{n-1}]h_{n-1}[\epsilon_{n-1}].\] Since  $\pi(c,U)H =H\pi(c,U)$, there is an $h\in H$ and a loop $\epsilon:[0,1]\rightarrow U$ with $\epsilon(0)=\epsilon(1)=x_0$ such that $[\gamma\cdot \alpha^-]=h[\epsilon]$. Hence, $\left<\gamma\cdot \alpha^-\right>=\left<\epsilon\right>\in \left<c,U\right>\cap p_H^{-1}(x_0)$, so that $\left<\gamma\cdot \alpha^-\right>\not=\left<\beta\cdot \alpha^-\right>$. Consequently, $[\gamma\cdot \beta^-]\not\in H$.

\end{proof}

\section{Continuous monodromy versus small loop transfer}\label{SLT}

We now examine how our results relate to Theorem~2.5 of \cite{PMTAR}, which is stated in terms of small loop transfer, rather than continuous monodromy.
The authors of \cite{PMTAR} introduced the following relative version of small loop transfer (SLT) that had previously been defined in \cite{BDLM} for $H=\{1\}$.

\begin{definition} \label{SLT-Def} Let $H\leq \pi_1(X,x_0)$. We call $X$ an {\em $H$-SLT space at $x_0$} if for every path $\alpha:([0,1],0)\rightarrow (X,x_0)$ and for every open neighborhood $U$ of $x_0$ in $X$, there is an open neighborhood $V$ of $\alpha(1)$ in $X$ such that for every loop $\beta:([0,1],\{0,1\})\rightarrow (V,\alpha(1))$, there is a loop $\gamma:([0,1],\{0,1\})\rightarrow (U,x_0)$ with $[\alpha\cdot \beta\cdot \alpha^-\cdot \gamma]\in H$. We call $X$ an {\em $H$-SLT space} if for every $x\in X$ and for every path $\delta:[0,1]\rightarrow X$ from $\delta(0)=x_0$ to $\delta(1)=x$, $X$ is a $[\delta^-]H[\delta]$-SLT space at $x$.
\end{definition}

\begin{Remark} For locally path-connected $X$ and normal $H \trianglelefteq \pi_1(X,x_0)$, we have that $X$ is an $H$-SLT space if and only if for every $\delta:([0,1],0)\rightarrow (X,x_0)$, the topology of $\widetilde{X}_{[\delta^-]H[\delta]}$ agrees with the quotient topology of the path space in the compact-open-topology. (See \cite{BDLM} and \cite{PMTAR}.)
\end{Remark}

A proof of the following relationship can be found in \cite[Proposition 2.13]{PMTAR}:

\begin{lemma} \label{SLT-Mon} Let $H\leq\pi_1(X,x_0)$. Then the following are equivalent:
\begin{itemize}
\item[(i)] For every $x,y\in X$ and every path $\beta:[0,1]\rightarrow X$ from $\beta(0)=x$ to $\beta(1)=y$, the monodromy $\phi_\beta:p_H^{-1}(x)\rightarrow p_H^{-1}(y)$ is continuous.
\item[(ii)] $X$ is an $H$-SLT space.
\end{itemize}
\end{lemma}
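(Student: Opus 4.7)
The plan is to translate both conditions into the same algebraic statement about elements of $\pi_1(X,x_0)$ and then observe that one is literally the other after an appropriate change of basepoint and conjugation. The key observation to exploit throughout is that, in the fiber, $\left<\alpha\cdot\eta\cdot\beta\right>=\left<\alpha\cdot\beta\cdot\gamma\right>$ holds precisely when $\gamma$ is a loop at $\beta(1)$ and $[\alpha\cdot\eta\cdot\beta\cdot\gamma^-\cdot\beta^-\cdot\alpha^-]\in H$.

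First, I would unpack continuity of $\phi_\beta:p_H^{-1}(x)\rightarrow p_H^{-1}(y)$ at a point $\left<\alpha\right>$, where $\alpha:([0,1],0)\rightarrow(X,x_0)$ with $\alpha(1)=x$. Testing against a basic neighborhood $\left<\alpha\cdot\beta,U\right>$ of $\phi_\beta(\left<\alpha\right>)=\left<\alpha\cdot\beta\right>$ amounts to finding an open $V\ni x$ such that for every loop $\eta$ in $V$ at $x$ there exists a loop $\gamma$ in $U$ at $y$ with
\[[\alpha\cdot\eta\cdot\beta\cdot\gamma^-\cdot\beta^-\cdot\alpha^-]\in H.\]
In parallel, I would unpack $[\delta^-]H[\delta]$-SLT at $x$ (with $\delta$ a chosen path from $x_0$ to $x$): for every path $\alpha':([0,1],0)\rightarrow(X,x)$ and open $U\ni x$, there exists an open $V\ni\alpha'(1)$ such that every loop $\beta'$ in $V$ at $\alpha'(1)$ admits a loop $\gamma$ in $U$ at $x$ with
\[[\delta\cdot\alpha'\cdot\beta'\cdot(\alpha')^-\cdot\gamma\cdot\delta^-]\in H.\]

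To prove (ii)$\Rightarrow$(i), I would take $\delta=\alpha\cdot\beta$ and apply $[\delta^-]H[\delta]$-SLT at $y$ to the path $\alpha':=\beta^-$ and the neighborhood $U$ of $y$; the loops $\beta'$ in the resulting $V$ are exactly the loops $\eta$ at $x$, and after multiplying the SLT conclusion on the left by $[\alpha\cdot\beta]$ and on the right by $[\alpha\cdot\beta]^{-1}$ and replacing $\gamma$ by $\gamma^-$ (legitimate since loops in $U$ based at $y$ are closed under inversion), it becomes exactly the reformulated continuity above. Conversely, for (i)$\Rightarrow$(ii), given $x$, $\delta$, $\alpha'$ and open $U\ni x$, I would apply continuity of $\phi_{(\alpha')^-}:p_H^{-1}(\alpha'(1))\rightarrow p_H^{-1}(x)$ at the point $\left<\delta\cdot\alpha'\right>$ with target neighborhood $\left<\delta,U\right>$ of $\phi_{(\alpha')^-}(\left<\delta\cdot\alpha'\right>)=\left<\delta\right>$. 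The neighborhood $V$ of $\alpha'(1)$ that continuity produces witnesses SLT: for any loop $\beta'$ in $V$ at $\alpha'(1)$, the class $\left<\delta\cdot\alpha'\cdot\beta'\cdot(\alpha')^-\right>$ lies in $\left<\delta,U\right>$, which unpacks to the existence of a loop $\tilde\gamma$ in $U$ at $x$ with $[\delta\cdot\alpha'\cdot\beta'\cdot(\alpha')^-\cdot\tilde\gamma^-\cdot\delta^-]\in H$; taking $\gamma=\tilde\gamma^-$ completes the translation.

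The only genuine difficulty is the bookkeeping of concatenations, orientations, and conjugating paths so that the two algebraic conditions literally coincide. Once one pins down the correct basepoint (namely $y$ in one direction, $\alpha'(1)$ in the other), the correct conjugating path $\delta$, and the description of $\left<\alpha,V\right>\cap p_H^{-1}(x)$ as the set of classes $\left<\alpha\cdot\eta\right>$ with $\eta$ a loop in $V$ at $x$, the equivalence reduces to a direct word-level translation using the freedom to pass between $\gamma$ and $\gamma^-$.
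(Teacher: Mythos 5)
Your proof is correct, and it is essentially the standard translation argument: the paper itself does not prove this lemma but defers to \cite[Proposition~2.13]{PMTAR}, and your direct unpacking of fiber-continuity at a point $\left<\alpha\right>$ (using that basic neighborhoods may be re-rooted as $\left<\alpha,V\right>$ and that $\left<\alpha,V\right>\cap p_H^{-1}(x)$ consists of the classes $\left<\alpha\cdot\eta\right>$ with $\eta$ a loop in $V$ at $x$) into the $[\delta^-]H[\delta]$-SLT condition, with the substitutions $\delta=\alpha\cdot\beta$, $\alpha'=\beta^-$ in one direction and $\beta=(\alpha')^-$, $\alpha=\delta\cdot\alpha'$ in the other, is exactly that argument. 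The quantifier bookkeeping and the passage between $\gamma$ and $\gamma^-$ check out, so no gap.
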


In fact, the same proof yields:

\begin{lemma} \label{SLT-Conj} Let $H\leq \pi_1(X,x_0)$. Then the following are equivalent:
\begin{itemize}
\item[(i)] For every $x\in X$ and every path $\beta:[0,1]\rightarrow X$ from $\beta(0)=x$ to $\beta(1)=x_0$, the monodromy $\phi_\beta:p_H^{-1}(x)\rightarrow p_H^{-1}(x_0)$ is continuous.
\item[(ii)] For every $\delta:([0,1],\{0,1\})\rightarrow (X,x_0)$, $X$ is a $[\delta^-]H[\delta]$-SLT space at $x_0$.
\end{itemize}
\end{lemma}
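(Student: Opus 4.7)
The plan is to follow the same pattern as the proof of Lemma~\ref{SLT-Mon} cited from \cite[Proposition~2.13]{PMTAR}, localized at the basepoint $x_0$. The key observation is that continuity of a single monodromy $\phi_\beta$ at a specific point $\langle\alpha\rangle$ translates, via the definition of the basic open sets $\langle\alpha,U\rangle$ of $\widetilde{X}_H$, into a pointwise SLT-type statement, with the conjugating subgroup dictated by the loop $\alpha\cdot\beta$ at $x_0$.

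For (i)$\Rightarrow$(ii), I would fix a loop $\delta$ at $x_0$, a based path $\alpha:([0,1],0)\rightarrow(X,x_0)$ ending at some $y\in X$, and an open neighborhood $U$ of $x_0$. I would invoke the continuity of $\phi_{\alpha^-}:p_H^{-1}(y)\rightarrow p_H^{-1}(x_0)$ at the point $\langle\delta\cdot\alpha\rangle$, whose image equals $\langle\delta\rangle$, to produce an open neighborhood $V$ of $y$ with
\[\phi_{\alpha^-}\bigl(\langle\delta\cdot\alpha,V\rangle\cap p_H^{-1}(y)\bigr)\subseteq\langle\delta,U\rangle\cap p_H^{-1}(x_0).\]
For an arbitrary loop $\beta:([0,1],\{0,1\})\rightarrow(V,y)$, the membership $\langle\delta\cdot\alpha\cdot\beta\cdot\alpha^-\rangle\in\langle\delta,U\rangle$ yields a loop $\gamma:([0,1],\{0,1\})\rightarrow(U,x_0)$ with $[\delta\cdot\alpha\cdot\beta\cdot\alpha^-\cdot\gamma^-\cdot\delta^-]\in H$, equivalently $[\alpha\cdot\beta\cdot\alpha^-\cdot\gamma^-]\in[\delta^-]H[\delta]$. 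Taking $\gamma^-$ as the witness in Definition~\ref{SLT-Def} establishes the $[\delta^-]H[\delta]$-SLT property at $x_0$.

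For (ii)$\Rightarrow$(i), I would fix $x\in X$, a path $\beta:[0,1]\rightarrow X$ from $x$ to $x_0$, a class $\langle\alpha\rangle\in p_H^{-1}(x)$, and an open neighborhood $U$ of $x_0$. Setting $\delta=\alpha\cdot\beta$ (a loop at $x_0$), I would apply the $[\delta^-]H[\delta]$-SLT property at $x_0$ to the path $\alpha'=\beta^-:([0,1],0)\rightarrow(X,x_0)$ and neighborhood $U$. This gives an open neighborhood $V$ of $x$ such that every loop $\eta:([0,1],\{0,1\})\rightarrow(V,x)$ admits a loop $\gamma:([0,1],\{0,1\})\rightarrow(U,x_0)$ with $[\beta^-\cdot\eta\cdot\beta\cdot\gamma]\in[\delta^-]H[\delta]$, i.e., $[\delta\cdot\beta^-\cdot\eta\cdot\beta\cdot\gamma\cdot\delta^-]\in H$. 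Substituting $\delta=\alpha\cdot\beta$ and cancelling $\beta\cdot\beta^-$ on the left yields $[\alpha\cdot\eta\cdot\beta\cdot\gamma\cdot\beta^-\cdot\alpha^-]\in H$, which is precisely the statement $\langle\alpha\cdot\eta\cdot\beta\rangle=\langle\alpha\cdot\beta\cdot\gamma^-\rangle\in\langle\alpha\cdot\beta,U\rangle$ needed for continuity of $\phi_\beta$ at $\langle\alpha\rangle$.

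No substantive obstacle is anticipated: the whole argument is bookkeeping around a single algebraic identity viewed two ways. The only delicate step is choosing the auxiliary path correctly in each direction, namely $\alpha^-$ together with the base point $\langle\delta\cdot\alpha\rangle$ going forward, and $\alpha'=\beta^-$ together with $\delta=\alpha\cdot\beta$ going backward, so that in both cases the conjugating loop automatically sits inside the target neighborhood $U$ of $x_0$ rather than at the variable endpoint of $\alpha$.
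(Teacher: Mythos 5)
Your proposal is correct, and it is essentially the argument the paper intends: the paper proves this lemma by remarking that the proof of Lemma~\ref{SLT-Mon} (i.e., \cite[Proposition~2.13]{PMTAR}) carries over verbatim, and your two directions are exactly that translation between continuity of $\phi_{\beta}$ on basic neighborhoods $\left<\alpha,V\right>\cap p_H^{-1}(x)$ and the SLT condition for the conjugate subgroup $[\delta^-]H[\delta]$, with the correct choices $\delta\cdot\alpha$, $\phi_{\alpha^-}$ in one direction and $\delta=\alpha\cdot\beta$, $\beta^-$ in the other.
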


\begin{lemma}\label{SLT-N}
 For a normal subgroup $H \trianglelefteq \pi_1(X,x_0)$, the following are equivalent:
\begin{itemize}
\item[(i)] For every $x\in X$ and every path $\beta:[0,1]\rightarrow X$ from $\beta(0)=x$ to $\beta(1)=x_0$, the monodromy $\phi_\beta:p_H^{-1}(x)\rightarrow p_H^{-1}(x_0)$ is continuous.
\item[(ii)] $X$ is an $H$-SLT space at $x_0$.
\end{itemize}
\end{lemma}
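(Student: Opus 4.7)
The plan is to deduce Lemma~\ref{SLT-N} directly from the already-established Lemma~\ref{SLT-Conj}, with the normality of $H$ collapsing the parametrized family of conditions there into a single condition.

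Specifically, by Lemma~\ref{SLT-Conj}, condition (i) is equivalent to the statement that for every loop $\delta:([0,1],\{0,1\})\rightarrow (X,x_0)$, the space $X$ is a $[\delta^-]H[\delta]$-SLT space at $x_0$. Because $\delta$ is a loop at $x_0$, the subgroup $[\delta^-]H[\delta]$ is a conjugate of $H$ inside $\pi_1(X,x_0)$. The normality assumption $H\trianglelefteq \pi_1(X,x_0)$ thus gives $[\delta^-]H[\delta]=H$ for every such $\delta$, so the entire family of SLT conditions in Lemma~\ref{SLT-Conj}(ii) collapses to the single assertion that $X$ is an $H$-SLT space at $x_0$, which is precisely condition (ii) of the present lemma.

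Conversely, if $X$ is an $H$-SLT space at $x_0$, then taking $\delta$ to be any loop at $x_0$ and using $[\delta^-]H[\delta]=H$ shows that $X$ is a $[\delta^-]H[\delta]$-SLT space at $x_0$ for every loop $\delta$, and Lemma~\ref{SLT-Conj} then delivers condition (i).

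There is no real obstacle here; the proof is purely formal once Lemma~\ref{SLT-Conj} is in hand, and amounts to the single observation that conjugation by elements of $\pi_1(X,x_0)$ acts trivially on a normal subgroup. I would present the proof in one short paragraph citing Lemma~\ref{SLT-Conj} and the normality of $H$.
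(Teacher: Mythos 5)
Your argument is correct and coincides with the paper's intended route: the paper states Lemma~\ref{SLT-N} without a separate proof precisely because it is the specialization of Lemma~\ref{SLT-Conj} to normal $H$, where $[\delta^-]H[\delta]=H$ for every loop $\delta$ at $x_0$ collapses the family of SLT conditions to the single condition (ii). Nothing is missing.
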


Therefore, we may state one of the main results of \cite{PMTAR} as follows:

\begin{theorem}[Theorem~2.5 of \cite{PMTAR}] \label{thm}
Let $H \trianglelefteq \pi_1(X,x_0)$ be a normal subgroup. Suppose that $p^{-1}(x)$ is $T_1$ for every $x\in X$ and that for every $x\in X$ and every path $\beta:[0,1]\rightarrow X$ from $\beta(0)=x$ to $\beta(1)=x_0$, the monodromy $\phi_\beta:p_H^{-1}(x)\rightarrow p_H^{-1}(x_0)$ is continuous. Then $X$ is homotopically path Hausdorff relative to $H$.
\end{theorem}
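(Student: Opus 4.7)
The plan is to obtain this theorem as an immediate corollary of Theorem~\ref{thm3}, since its conclusion and most of its hypotheses match Theorem~\ref{thm3} verbatim. The only hypothesis that needs translation is the normality assumption on $H$: I need to observe that a normal subgroup is automatically locally quasinormal at the constant path, so that Theorem~\ref{thm3} applies.

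Specifically, if $H \trianglelefteq \pi_1(X,x_0)$ is normal, then for every subgroup $K\leq \pi_1(X,x_0)$ we have $HK=KH$. Taking $K=\pi(\alpha,V)$ for any path $\alpha$ starting at $x_0$ and any open neighborhood $V$ of $\alpha(1)$ shows that $H$ is locally quasinormal in the sense of Definition~\ref{LQN} (this is the first case of the lemma that records the two examples of local quasinormality). In particular, choosing $\alpha=c$ to be the constant path, we see that $H$ is locally quasinormal at the constant path in the sense of Definition~\ref{LQNC}.

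With this observation in hand, all hypotheses of Theorem~\ref{thm3} are satisfied: the continuity of $\phi_\beta$ for paths $\beta$ terminating at $x_0$ and the $T_1$ condition on $p_H^{-1}(x_0)$ are directly assumed (indeed, the stronger assumption that $p_H^{-1}(x)$ is $T_1$ at every $x\in X$ is made, but only the fiber over $x_0$ is needed). Applying Theorem~\ref{thm3} yields the conclusion that $X$ is homotopically path Hausdorff relative to $H$.

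I do not expect any genuine obstacle: Theorem~\ref{thm3} is strictly stronger than this statement, both because it weakens normality of $H$ to local quasinormality at $c$ (which, as noted, already drops the full strength of local quasinormality), and because it requires the $T_1$ condition only on the fiber over the base point. The value of presenting the theorem in this form is to highlight precisely how the results of \S\ref{HpH} recover and extend \cite[Theorem~2.5]{PMTAR}, via the equivalences recorded in Lemmas~\ref{SLT-Mon}--\ref{SLT-N} between continuous monodromy and the $H$-SLT property used in \cite{PMTAR}.
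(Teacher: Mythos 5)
Your proposal is correct: normality of $H$ gives $HK=KH$ for every subgroup $K\leq\pi_1(X,x_0)$, in particular $H\pi(c,V)=\pi(c,V)H$ for every neighborhood $V$ of $x_0$, so $H$ is locally quasinormal at the constant path, and Theorem~\ref{thm3} then applies verbatim (the hypothesis here is even stronger, since it assumes all fibers are $T_1$ while Theorem~\ref{thm3} only needs $p_H^{-1}(x_0)$). The route differs slightly in presentation from the paper's: the paper does not re-prove this statement but presents it as a translation of the cited result of \cite{PMTAR}, using Lemma~\ref{SLT-N} to convert the $H$-SLT hypothesis into continuity of the monodromies $\phi_\beta$ toward $x_0$ and Remark~\ref{HH} to phrase ``homotopically Hausdorff relative to $H$'' as the $T_1$ condition on fibers, and then merely remarks afterwards that Theorem~\ref{thm3} is a generalization. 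You instead carry out that implication explicitly, which yields a self-contained internal proof of the statement that does not rely on the argument in \cite{PMTAR}; what the paper's phrasing buys in exchange is the dictionary (Lemmas~\ref{SLT-Mon}--\ref{SLT-N}) making clear that the hypotheses really are those of \cite[Theorem~2.5]{PMTAR} restated in monodromy language.
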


We note that both Theorem~\ref{thm1} and Theorem~\ref{thm2} are extensions of Theorem~\ref{thm} and that Theorem~\ref{thm3} is a generalization of Theorem~\ref{thm}.
\vspace{10pt}

\noindent {\bf Acknowledgements.} The first author was partially supported by a grant from the Simons Foundation (No.\ 245042).

\end{document}